\theoremstyle{thmstyletwo}%
\newtheorem{theorem}{Theorem}%  meant for continuous numbers
\newtheorem{proposition}[theorem]{Proposition}%
\theoremstyle{thmstyletwo}
\newtheorem{definition}{Definition}
\numberwithin{equation}{section}
\begin{document}

\DOI{DOI HERE}
\copyrightyear{2021}
\vol{00}
\pubyear{2021}
\access{Advance Access Publication Date: Day Month Year}
\appnotes{Paper}
\copyrightstatement{Published by Oxford University Press on behalf of the Institute of Mathematics and its Applications. All rights reserved.}
\firstpage{1}

%\subtitle{Subject Section}

\title[Backstepping Control Laws for Multi-Dimensional PDEs]{Backstepping Control Laws for Higher-Dimensional PDEs:\\ Spatial Invariance and Domain Extension Methods}

\author{Rafael Vazquez*\ORCID{0000-0001-6904-2055}
\address{\orgdiv{Departamento de Ingenieria Aeroespacial y Mecanica de Fluidos}, \orgname{Universidad de Sevilla}, \orgaddress{\street{Camino de los Descubrimiento s.n.}, \postcode{41092}, \state{Sevilla}, \country{Spain}}}}

\authormark{Rafael Vazquez}

\corresp[*]{Corresponding author: \href{email:rvazquez1@us.es}{rvazquez1@us.es}}

\received{28}{2}{2025}
\revised{Date}{0}{Year}
\accepted{Date}{0}{Year}

%\editor{Associate Editor: Name}

\abstract{
This paper extends backstepping to higher-dimensional PDEs by leveraging domain symmetries and structural properties. We systematically address three increasingly complex scenarios. First, for rectangular domains, we characterize boundary stabilization with finite-dimensional actuation by combining backstepping with Fourier analysis, deriving explicit necessary conditions. Second, for reaction-diffusion equations on sector domains, we use angular eigenfunction expansions to obtain kernel solutions in terms of modified Bessel functions. Finally, we outline a domain extension method for irregular domains, transforming the boundary control problem into an equivalent one on a target domain. This framework unifies and extends previous backstepping results, offering new tools for higher-dimensional domains where classical separation of variables is inapplicable.
}

\keywords{Partial differential equations; backstepping; boundary control; spatial invariance; geometric control.}

 \interdisplaylinepenalty=2500
% \boxedtext{
% \begin{itemize}
% \item Key boxed text here.
% \item Key boxed text here.
% \item Key boxed text here.
% \end{itemize}}

\maketitle

\section{Introduction}

\subsection{Higher-dimensional backstepping control of PDEs}
The control of partial differential equations has experienced remarkable developments in recent decades, with numerous methodologies emerging for designing stabilizing feedback laws. The backstepping methodology, initially developed for ordinary differential equations \cite{KKK} and later extended to PDEs \cite{krstic}, has proven particularly successful. In its original form, the method focused on one-dimensional parabolic and hyperbolic systems, providing systematic procedures for boundary feedback design through (typically) Volterra integral transformations. Subsequent advances included the use of spatial Volterra series for nonlinear systems \cite{vaz2008_a,vaz2008_b}, coupled hyperbolic 1-D systems~\cite{long-nonlinear,auriol,rijke}, coupled parabolic systems \cite{Vazquez2017,simon,leo}, adaptive control~\cite{aamo}, output regulation~\cite{deutscher}, and delays~\cite{krstic5}. See the  survey~\cite{survey} for many more applications and extensions.

The extension of backstepping to higher-dimensional domains, however, presents significant challenges due to the increased complexity of the integral transformations involved and their associated kernel equations, which become significantly harder or impossible to solve in their general form. Despite these challenges, researchers have made substantial progress by focusing on cases where special geometric properties and boundary conditions can be exploited to simplify the kernel equations and enable controller synthesis, sometimes even in explicit form. Early successes in multi-dimensional control were achieved in the context of fluid flows, based on the concept of spatial invariance \cite{bamieh2002distributed}. This property, also known as translational invariance, emerges when the system dynamics and geometry remain invariant under translations in one or more spatial coordinates. Such invariance allows for a reduction in the system's dimensionality by transforming the spatially invariant coordinates into parameters. Essentially, spatial invariance permits the substitution of spatial derivatives and spatial dependence with algebraic multiplication and dependence on new parameters that replace the spatially invariant coordinates, transforming the original high-dimension PDE system into a family (or \emph{ensemble}) of parameterized lower-dimensional systems (typically 1-D) that are easier to analyze and control.

Several works have successfully applied these concepts across different domains. The studies \cite{Vazquez2007,coc2009} addressed the control of infinite channel flows by exploiting the spatial invariance of the geometry to transform the original PDEs into ensembles of simpler one-dimensional equations parameterized by wave numbers, while \cite{vazquez2008control} examined periodic channels using similar ideas with Fourier series. This methodology has also proven effective for convection loops \cite{convloop} and magnetohydrodynamic channel flows \cite{Xu2008,vaz2008_d}. Subsequently, \cite{Vazquez2016disk} and \cite{vazquez2019sphere} developed backstepping controllers for reaction-diffusion equations defined on a 2-D disk and a 3-D sphere, respectively. By leveraging the radial symmetry inherent to these geometries, they reduced the complexity of the kernel equations and constructed explicit control laws guaranteeing exponential stability. These developments were generalized in \cite{nball} to address reaction-diffusion systems on n-dimensional balls, utilizing a combination of spherical harmonics and Bessel functions to construct the backstepping transformation and associated feedback laws. A recent advancement by \cite{R.2022ball} proposed a backstepping design for reaction-diffusion systems on balls of arbitrary dimension with spatially-dependent reaction terms, employing power series expansions to resolve the singular behavior in the kernel equations. Recent developments have also included applications to multi-agent deployment in 3-D space \cite{jie,zhang2024multi} and extensions to PDEs with boundary conditions governed by lower-dimensional PDEs \cite{jie2,vazquez-zhang}.

Alternative approaches have also emerged in the field. The work of \cite{meurer2} provided a comprehensive framework for higher-dimensional PDEs using flatness-based methods and backstepping. Domain decomposition techniques \cite{lagnese2012domain} have also proven successful in handling complex geometries, while other design methods applicable to the geometry considered in this paper include those presented in \cite{triggiani} and \cite{Barbu}.

 \subsection{Contribution and paper structure}
In this paper, we present a unified framework for extending backstepping control techniques to higher-dimensional domains by systematically exploiting their geometric properties. Our approach is based in spatial invariance principles, explained in Section~\ref{sec:spinv}, and encompasses three progressively complex scenarios:

First, in Section~\ref{sec:square}, we consider the problem of finite-dimensional control in rectangular domains, where we provide a complete characterization of the controllability conditions based on the modal decomposition of both the state and the control action. This analysis reveals fundamental limitations and possibilities in controlling higher-dimensional systems with a finite number of actuators.

Second, in Section~\ref{sec:pizza}, we address the control of reaction-diffusion equations on sector domains (which we could playfully refer to as "pizza" domains due to their shape). For these geometries, we derive explicit kernel solutions using modified Bessel functions, demonstrating how radial symmetry can be exploited to obtain tractable control laws.

Finally, in Section~\ref{sec:domain}, we tackle the challenging case of non-spatially invariant domains, exemplified by a "piano-shaped" domain that presents both theoretical and practical interests. Here, we develop a novel domain extension methodology that extends the original complex geometry into a simpler one where known control techniques can be applied, an idea suggestive of a ``target domain'' complementary of the classical backstepping target system.

We finish the paper with some conclusions and takeaways in Section~\ref{sec:concl}.

\section{Spatial Invariance and Backstepping Control in Higher Dimension}\label{sec:spinv}

A key property of many distributed parameter systems in higher dimension is their invariance with respect to translations in one or more spatial coordinates. This property, when present, allows for significant simplification of the control design problem by enabling the reduction of the system's dimensionality through spectral decomposition methods, as first systematically established by Bamieh et al. \cite{bamieh2002distributed}.
\begin{definition}[Spatial Invariance]
A system is called spatially invariant when its spatial coordinates belong to a Lie group $\mathcal{G}$, and both its dynamics and geometry are invariant with respect to the group action on these coordinates. The underlying group structure enables the transformation of the original infinite-dimensional system into a parameterized family of simpler systems through harmonic analysis.
\end{definition}
The power of this formulation lies in its generality and practical applicability. For periodic systems, $\mathcal{G}$ can be the circle group $\mathbb{S}^1$ (as in circular pipes) or the n-dimensional torus $\mathbb{T}^n$ (as in periodic channels). For unbounded domains, $\mathcal{G}$ is typically the Euclidean group $\mathbb{R}^k$ (as in infinite channels). For systems with rotational symmetry, $\mathcal{G}$ could be the special orthogonal group $SO(k)$ or the unit sphere $\mathbb{S}^k$. Each of these geometric structures naturally appears in many physical applications - from fluid flows in pipes to vehicular platoons, from cross-directional control in paper machines to networks of micro-electromechanical systems (MEMS). The key insight of Bamieh et al. \cite{bamieh2002distributed} was showing that by exploiting these symmetries through appropriate spectral transformations, the infinite-dimensional optimal control problem could be reduced to solving a parameterized family of lower dimensional problems. Perhaps even more importantly, they proved that the resulting  controllers inherit an inherent degree of spatial localization, making them particularly suitable for distributed implementation.

In the next section, we illustrate these ideas with a canonical 2D heat equation posed on a semi-infinite strip. 
We will apply a Fourier-based approach to decompose the PDE into a parameterized family of 1D problems, 
show how backstepping can be designed for each mode, and then recover the distributed control law in physical space.
Finally, we highlight how spectral truncation ensures stability with only a finite number of actively controlled modes.

\subsection{Example: Heat Equation in Semi-Infinite Strip}
As a canonical example of backstepping design through spatial invariance, consider a 2D heat equation in a semi-infinite strip $(x,y)\in(-\infty,\infty)\times[0,1]$ with Dirichlet boundary condition and a fully controlled boundary, see Fig.~\ref{fig:strip}.
\begin{align}
u_t &= \epsilon(u_{xx} + u_{yy}) + \lambda u \label{eq:heat_strip} \\
u(t,x,0) &= 0 \\
u(t,x,1) &= U(t,x) \quad \text{(control)} \label{eq:heat_strip_control}
\end{align}

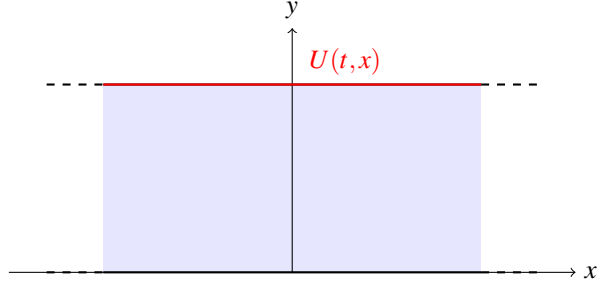
\begin{figure}
\centering
\begin{tikzpicture}[scale=2.5]
    % Fill domain
    \fill[blue!10] (-1,0) rectangle (1,1);
    
    % Draw boundaries
    \draw[thick] (-1,0) -- (1,0);
    \draw[thick] (-1,1) -- (1,1);
    \draw[thick,dashed] (-1.3,0) -- (-1,0);
    \draw[thick,dashed] (1,0) -- (1.3,0);
    \draw[thick,dashed] (-1.3,1) -- (-1,1);
    \draw[thick,dashed] (1,1) -- (1.3,1);
    
    % Axes
    \draw[->] (-1.5,0) -- (1.5,0) node[right] {$x$};
    \draw[->] (0,0) -- (0,1.3) node[above] {$y$};
    
    % Control
    \draw[red,thick] (-1,1) -- (1,1) node[midway,above] {$\quad\quad\quad\quad U(t,x)$};
\end{tikzpicture}
\caption{Semi-infinite strip domain with distributed boundary control}
\label{fig:strip}
\end{figure}

This system exhibits spatial invariance with respect to the $x$ coordinate. The domain extends infinitely in the $x$ direction, with $x \in \mathbb{R}$. The control $U(t,x)$ is distributed over all $x$, while the PDE operators (specifically the Laplacian) commute with translations in $x$. Moreover, the strip geometry itself remains unchanged under translations in the $x$ direction.

For such systems, the Fourier transform provides a powerful tool to reduce the dimensionality of the control problem, as it has several important properties relevant to our control design. Parseval's identity ensures that the $L^2$ norm is preserved between physical and Fourier space, allowing us to establish stability properties in either domain. The transform converts spatial derivatives into multiplication operators, significantly simplifying the analysis of each mode. Furthermore, the natural damping of high wavenumbers provides inherent stability for the high-frequency components of the solution. On the other hand, care must be taken since real functions become complex-valued.

For functions $f(x,y)$ with $x\in(-\infty,\infty)$, we define the transform pair:
\begin{align}
\hat f(k,y) &= \int_{-\infty}^{\infty} f(x,y)e^{-2\pi i kx}dx \quad \text{(direct transform)} \\
f(x,y) &= \int_{-\infty}^{\infty} \hat f(k,y)e^{2\pi i kx}dk \quad \text{(inverse transform)}
\end{align}

For the analysis that follows, we define the spatial $L^2$ norm as
\begin{equation}
\Vert u(t,\cdot,\cdot)\Vert _{L^2(\mathbb{R}\times[0,1])}^2 = \int_0^1\int_{-\infty}^{\infty} u^2(t,x,y) dx dy
\end{equation}
and its Fourier transform counterpart
\begin{equation}
\Vert \hat{u}(t,\cdot,\cdot)\Vert _{L^2(\mathbb{R}\times[0,1])}^2 = \int_0^1\int_{-\infty}^{\infty} \vert \hat{u}(t,k,y) \vert^2 dk dy
\end{equation}
Even if $u(t,x,y)$ is real-valued, its Fourier transform $\hat{u}(t,k,y)$ 
is generally a complex-valued function of $k$. 
Hence, in the $L^2$ norm, we use $\vert\hat{f}(k,y)\vert^2 = \hat{f}(k,y)\,\overline{\hat{f}(k,y)}$ where $\overline z$ is the complex conjugate of a complex number $z$.

By Parseval's theorem \cite{rudin1962}, we have the equality
\begin{equation}
\Vert  u(t,\cdot,\cdot)\Vert _{L^2(\mathbb{R}\times[0,1])} = \Vert \hat{u}(t,\cdot,\cdot)\Vert _{L^2(\mathbb{R}\times[0,1])}
\end{equation}
Moreover, the Fourier transform constitutes an isomorphism between these $L^2$ spaces, establishing a one-to-one correspondence between functions and their transforms that will be key in analyzing both the idealized control law and its practical approximations. In what follows, we will drop the $\hat u$ notation and understand we are referring to Fourier transforms when the function depends on the wave number $k$.

Applying the Fourier transform to \eqref{eq:heat_strip}, we obtain:
\begin{align}
u_t &= \epsilon(-4\pi^2k^2u + u_{yy}) + \lambda u \\
u(t,k,0) &= 0 \\
u(t,k,1) &= U(t,k)
\end{align}
Throughout our analysis, $(x,y)\in\mathbb{R}\times[0,1]$ denote the physical-space variables, while $k\in\mathbb{R}$ denotes the wavenumber in the Fourier-transformed system. 
After applying the Fourier transform in the $x$ direction, the function $u(t,k,y)$ becomes complex-valued in $k$, 
yet the norm definitions and stability arguments remain valid by considering $|u|^2$ (rather than $u^2$) under the integral. Thus, the original 2D PDE transforms into an ensemble of (complex-valued) 1D PDEs parameterized by the wavenumber $k$. The $x$ derivatives become \emph{algebraic} terms through multiplication by $-4\pi^2k^2$, with higher wavenumbers experiencing stronger natural damping. The control design can be performed independently for each value of $k$. 

To design a backstepping controller, consider first the following target system, which is equally damped for all the wave numbers\footnote{Different wave numbers could be damped differently, e.g. to achieve an uniform decay rate independent of the wave number, but this is not sought in this paper.}
\begin{align}
w_t &= \epsilon(-4\pi^2k^2w + w_{yy}) -c w \\
w(t,k,0) &= 
w(t,k,1) = 0
\end{align}

Note that, classically as one does in regular 1D backstepping, the operator is unchanged (in this case this includes the algebraic term $-4\pi^2k^2w$ introduced by the Fourier transform). The transformation used to map the system into the target variables is defined as dependent on the wave number
\begin{equation}
w(t,k,y)=u(t,k,y)-\int_0^y K(k,y,\eta) u(t,k,\eta) d\eta
\end{equation}
which is a Volterra-type transformation and thus readily invertible with only mild requirements from the kernel $K$ (such as boundedness, see e.g.~\cite{survey}).

The resulting kernel equations are not dependent on $k$ due to the simplicity of the equations, obtaining the classical kernel equations for a constant-coefficient 1D reaction-diffusion equation:
\begin{align}
K_{yy}(k,y,\eta)-K_{\eta\eta}(k,y,\eta) &= \lambda_0 K(k,y,\eta) \\
K (k,y,0) &= 0 \\
K(k,y,y)& = -\frac{\lambda_0 y}{2}
\end{align}
where $\lambda_0=\frac{\lambda+c}{\epsilon}$,  obtaining~\cite{krstic} $K(k,y,\eta)=\lambda_0 \eta \frac{\mathrm{I}_1(\sqrt{\lambda_0(y^2-\eta^2)})}{\sqrt{\lambda_0(y^2-\eta^2)}}$, with $\mathrm{I}_1$ the first-order modified Bessel function of the first kind.

Then by considering as usual the boundary conditions of original and target systems at $x=1$, and the backstepping transformation, one can deduce that the control law in Fourier space takes the form:
\begin{equation}
U(t,k) = -\int_0^1 \lambda_0 \eta \frac{I_1(\sqrt{\lambda_0(1-\eta^2)})}{\sqrt{\lambda_0(1-\eta^2)}} u(t,k,\eta) d\eta
\end{equation}
where $\lambda_0 = \frac{\lambda+c}{\epsilon}$. The physical control is then recovered via the inverse transform:
\begin{equation}
U(t,x) = \int_{-\infty}^{\infty} U(t,k)e^{2\pi i kx}dk
\end{equation}

The physical space control law can be written more explicitly by substituting the Fourier transform of $u(t,x,y)$:
\begin{align}
U(t,x) &= -\int_0^1 \int_{-\infty}^{\infty} \lambda_0 \eta \frac{I_1(\sqrt{\lambda_0(1-\eta^2)})}{\sqrt{\lambda_0(1-\eta^2)}}  \left(\int_{-\infty}^{\infty} u(t,\xi,\eta)e^{-2\pi i k\xi}d\xi\right) e^{2\pi i kx} d\eta dk
\end{align}

After exchanging the order of integration and using the Fourier transform identity $\int_{-\infty}^{\infty} e^{2\pi i k(x-\xi)}dk = \delta(x-\xi)$, we obtain:
\begin{align}
U(t,x) &= -\int_0^1 \int_{-\infty}^{\infty} \lambda_0 \eta \frac{I_1(\sqrt{\lambda_0(1-\eta^2)})}{\sqrt{\lambda_0(1-\eta^2)}} \delta(x-\xi) u(t,\xi,\eta) d\eta \nonumber \\
&=
-\int_0^1 \lambda_0 \eta \frac{I_1(\sqrt{\lambda_0(1-\eta^2)})}{\sqrt{\lambda_0(1-\eta^2)}} u(t,x,\eta) d\eta
\end{align}
The appearance of the Dirac delta $\delta(x-\xi)$ in the inverse transform is a manifestation of 
the Fourier principle that a sum (or integral) over all wavenumbers in $\mathbb{R}$ 
reconstructs functions perfectly in physical space. 
Here, controlling each $k$-mode independently ``sums up'' to yield a purely local kernel 
in the $x$ direction, reflecting a remarkable level of spatial locality in the resulting control law. And we have obtained an explicit formula for the gain kernel of our control law, even if it is distributional, namely $K_1(x,\xi,\eta)=-\lambda_0 \eta \frac{I_1(\sqrt{\lambda_0(1-\eta^2)})}{\sqrt{\lambda_0(1-\eta^2)}} \delta(x-\xi)$.

Considering on the other hand that higher wave numbers are naturally stable, we can alternatively introduce a spectral truncation to wavenumbers $|k| \leq N$:
\begin{equation}
U_N(t,x) = -\int_0^1 \int_{-\infty}^{\infty} h(\eta) \left(\int_{-N}^N u(t,\xi,\eta)e^{2\pi i k(x-\xi)}dk\right) d\xi d\eta
\end{equation}
where
\begin{equation}
h(\eta) = \lambda_0 \eta \frac{\mathrm{I}_1(\sqrt{\lambda_0(1-\eta^2)})}{\sqrt{\lambda_0(1-\eta^2)}}
\end{equation}

Computing the inner integral yields:
\begin{align}
U_N(t,x) &= -\int_0^1 \int_{-\infty}^{\infty} h(\eta) [2N\text{sinc}(2\pi N(x-\xi))] u(t,\xi,\eta)d\xi d\eta \nonumber \\
&= -\int_0^1 \int_{-\infty}^{\infty} K_{1,N} (x,\xi,\eta)u(t,\xi,\eta)d\xi d\eta \label{eqn-Un}
\end{align}
where
\begin{equation}
K_{1,N}(x,\xi,\eta) = 2N h(\eta)\text{sinc}(2\pi N(x-\xi))
\end{equation}
and where $\text{sinc}(z) = \frac{\sin(z)}{z}$ is the cardinal sine function. The kernel thus splits naturally into two parts: the Bessel function term arising from the backstepping design, and the sinc function term from the wavenumber cutoff.
This truncated kernel $K_{1,N}$ converges to $h(\eta)\delta(x-\xi)$ as $N \to \infty$ in the sense of distributions, providing a formal link between the full spectrum and truncated implementations. 

The parameter $N$ thus serves a dual role: It determines the spatial locality of the control law in $x$, with larger $N$ giving closer approximation to the ideal delta function kernel (highly-localized control law), and it sets the number of actively controlled modes needed to achieve a desired stability margin, as formalized next.

\begin{theorem}[Stability with Spectral Truncation]  \label{th-stab} 
For any given decay rate $c > 0$, set $
N_0 = \sqrt{\frac{c+\lambda}{4\pi^2\epsilon}}$ and let $N\in \mathbb{N}$ such that $N \geq N_0$. For the closed-loop system (\ref{eq:heat_strip})--(\ref{eq:heat_strip_control}) with the
truncated feedback control law $U_N$ in (\ref{eqn-Un}),  the equilibrium $u(t,x,y)\equiv 0$ achieves exponential stability with decay rate $c$:
\begin{equation}
\Vert u(t,\cdot,\cdot)\Vert _{L^2(\mathbb{R}\times[0,1])} \leq M e^{-ct}\Vert u(0,\cdot,\cdot)\Vert _{L^2(\mathbb{R}\times[0,1])}
\end{equation}
for some $M\geq1$ that does not depend on $N$.

\end{theorem}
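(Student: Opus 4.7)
The plan is to work in the Fourier-transformed system and split the spectrum into the actively controlled band $|k|\le N$ and the uncontrolled tail $|k|>N$. By Parseval's identity,
\begin{equation}
\Vert u(t,\cdot,\cdot)\Vert_{L^2(\mathbb{R}\times[0,1])}^2 = \int_0^1\int_{|k|\le N}|u(t,k,y)|^2\, dk\, dy + \int_0^1\int_{|k|>N}|u(t,k,y)|^2\, dk\, dy,
\end{equation}
so it suffices to prove that each piece decays in time at rate $c$ with a multiplicative constant independent of $k$ and of $N$. The truncated law $U_N$ corresponds in Fourier space to applying the full backstepping feedback for $|k|\le N$ and $U(t,k)=0$ for $|k|>N$, so the two bands decouple and can be analyzed in isolation.

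For the uncontrolled tail, the Fourier-transformed PDE is a homogeneous Dirichlet problem with effective reaction coefficient $\lambda-4\pi^2 k^2\epsilon$. A standard energy estimate, combined with the 1D Poincar\'e inequality $\int_0^1|u_y|^2\,dy\ge\pi^2\int_0^1|u|^2\,dy$, gives
\begin{equation}
\frac{d}{dt}\int_0^1|u(t,k,y)|^2\,dy \le 2\bigl(\lambda-4\pi^2 k^2\epsilon-\epsilon\pi^2\bigr)\int_0^1|u(t,k,y)|^2\,dy.
\end{equation}
The threshold $N\ge N_0$ was chosen precisely so that $4\pi^2 k^2\epsilon>\lambda+c$ whenever $|k|>N$, which makes the bracket bounded above by $-c$ and yields pointwise-in-$k$ decay at rate $c$ on each high-mode slice.

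For the controlled band, I would invoke the backstepping transformation and its Volterra inverse; crucially, the kernel $K$ derived earlier is \emph{independent of $k$}, so both transformations are bounded operators on $L^2[0,1]$ whose norms depend only on $\lambda_0$. The target system for each $k$ is a homogeneous Dirichlet reaction-diffusion equation with effective reaction coefficient $-c-4\pi^2 k^2\epsilon$, and the same type of energy estimate (which in this case does not even require Poincar\'e) immediately produces $\Vert w(t,k,\cdot)\Vert_{L^2[0,1]}\le e^{-ct}\Vert w(0,k,\cdot)\Vert_{L^2[0,1]}$. Transferring back to $u$ through the direct and inverse transformations yields $\Vert u(t,k,\cdot)\Vert_{L^2[0,1]}\le M_0\, e^{-ct}\Vert u(0,k,\cdot)\Vert_{L^2[0,1]}$ with $M_0$ depending only on $\lambda_0$.

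Assembling both pieces, integrating over $k$ and invoking Parseval once more delivers the claim with $M=\max\{1,M_0\}$. What makes the constant $M$ truly independent of $N$ is the $k$-independence of the kernel, which I see as the main subtlety of the argument: it is precisely what allows $M_0$ to be taken uniform across the entire low band $|k|\le N$. Had the target dynamics been tuned to enforce, for instance, a $k$-dependent damping providing a uniform decay rate across all modes, one would instead need parametric bounds on a $k$-indexed family of kernels, which would in general require careful estimates of the associated modified Bessel functions.
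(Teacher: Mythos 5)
Your proposal is correct and follows essentially the same route as the paper's proof: split the spectrum at $|k|=N$ via Parseval, run a Lyapunov/energy estimate on the target system for the controlled band (using the $k$-independence of the backstepping kernel and its inverse to get a uniform constant $M$), run a direct energy estimate on the uncontrolled tail where $4\pi^2\epsilon k^2 > \lambda + c$, and reassemble with Parseval. The only cosmetic difference is your invocation of the Poincar\'e inequality in the tail estimate, which the paper does not need (it simply discards the nonpositive $-\epsilon\Vert u_y\Vert^2$ term); your closing remark correctly identifies the $k$-independence of the kernel as the point that makes $M$ uniform in $N$.
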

\begin{proof}
The proof proceeds in two steps, analyzing separately the controlled and uncontrolled wavenumbers, to prove the desired exponential rate of convergence, and finally stitches both results together with the help of Parseval's Theorem.

Step 1 (Controlled wavenumbers, $|k| < N$):
Consider the Lyapunov function for the target system
\begin{equation}
V_k(t) = \frac{1}{2} \Vert w(t,k,\cdot)\Vert_{L^2[0,1]}^2 = \frac{1}{2} \int_0^1 \vert w(t,k,\eta) \vert ^2 d\eta
\end{equation}
Taking its time derivative along solutions of the target system:
\begin{align}
\dot{V}_k(t) &= \text{Re}\left\{\int_0^1 w(t,k,\eta)\overline{w_t(t,k,\eta)}d\eta \right\} \nonumber \\
&= \text{Re}\left\{ \int_0^1 w(t,k,\eta)\overline{[\epsilon(\partial\eta^2 - 4\pi^2k^2) - c]w(t,k,\eta)}d\eta \right\} \nonumber \\
&= -2c V_k + \epsilon\text{Re}\left\{\int_0^1 w(t,k,\eta)\overline{ w_{\eta \eta} (t,k,\eta)}d\eta \right\}- 8\pi^2\epsilon k^2 V_k
\end{align}
where $\text{Re}\left\{ \cdot\right\}$ denotes real part. Using integration by parts and the vanishing boundary conditions of $w$, the middle term becomes:
\begin{equation}
\epsilon\text{Re}\left\{\int_0^1 w(t,k,\eta)\overline{ w_{\eta \eta}(t,k,\eta)}d\eta\right\} = -\epsilon \Vert w_{\eta}(t,k,\cdot)\Vert_{L^2[0,1]}^2 \leq 0
\end{equation}
Therefore:
\begin{equation}
\dot{V}_k(t) \leq  -2cV_k(t)
\end{equation}
and we readily obtain
$\Vert w(t,k,\cdot)\Vert_{L^2[0,1]} \leq \mathrm{e}^{-ct} \Vert w(0,k,\cdot)\Vert_{L^2[0,1]}$.
Now, using the direct and inverse transformations
\begin{align}
w(t,k,\eta) &= u(t,k,\eta) - \int_0^\eta K(k,\eta,y)u(t,k,y)dy \\
u(t,k,\eta) &= w(t,k,\eta) + \int_0^\eta L(k,\eta,y)w(t,k,y)dy
\end{align}
and the $L^\infty$ bounds on the kernels, see e.g.~\cite{krstic}, which are independent of $k$ in this case,
we can establish
\begin{equation}
\Vert  u(t,k,\eta) \Vert_{L^2[0,1]}^2 \leq (1 +  \Vert L(k,\cdot,\cdot)\Vert _{L^\infty})^2 \Vert  w(t,k,\cdot)\Vert_{L^2[0,1]}^2
\end{equation}
and
\begin{equation}
\Vert  w(0,k,\eta) \Vert_{L^2[0,1]}^2 \leq (1 + \Vert K(k,\cdot,\cdot)\Vert_{L^\infty})^2 \Vert  y(0,k,\cdot)\Vert_{L^2[0,1]}^2
\end{equation}
which implies
\begin{equation}
\Vert u(t,k,\cdot)\Vert_{L^2[0,1]} \leq M \mathrm{e}^{-ct} \Vert u(0,k,\cdot)\Vert_{L^2[0,1]}
\end{equation}
for $M=(1 +  \Vert L(k,\cdot,\cdot)\Vert _{L^\infty}) (1 + \Vert K(k,\cdot,\cdot)\Vert_{L^\infty})\geq 1$.

Step 2 (Uncontrolled wavenumbers, $|k| \geq N$):
For these modes, consider just the Lyapunov function
\begin{equation}
W_k(t) = \frac{1}{2} \Vert u(t,k,\cdot)\Vert_{L^2[0,1]}^2 = \int_0^1 \vert u(t,k,\eta) \vert ^2 d\eta
\end{equation}
Taking its time derivative along solutions of the uncontrolled PDE:
\begin{align}
\dot{W}_k(t) &= \text{Re}\left\{\int_0^1 u(t,k,\eta) \overline{[\epsilon(\partial\eta^2 - 4\pi^2k^2) + \lambda]u(t,k,\eta)}d\eta\right\} \nonumber \\
&\leq 2(\lambda-4\pi^2\epsilon k^2)W_k
\end{align}
by integrating by parts and applying the boundary conditions as in Step 1.

Therefore, choosing
\begin{equation}
N > \sqrt{\frac{c+\lambda}{4\pi^2\epsilon}}
\end{equation}
ensures $\dot{W}_k(t) \leq -2cW_k(t)$ for all $|k| \geq N$. Thus we directly get, for $\vert k\vert>N$ that $\Vert u(t,k,\cdot)\Vert_{L^2[0,1]} \leq \mathrm{e}^{-ct} \Vert u(0,k,\cdot)\Vert_{L^2[0,1]}$

The global result follows by combining both estimates to cover all wave numbers and using Parseval's theorem twice:
\begin{align}
\Vert u(t,\cdot,\cdot) \Vert_{L^2(\mathbb{R}\times[0,1])}^2 &= \int_{-\infty}^{\infty} \Vert u(t,k,\cdot) \Vert_{L^2[0,1]}^2 dk
\nonumber \\
&\leq M \mathrm{e}^{-ct}  \int_{-\infty}^{\infty} \Vert u(0,k,\cdot) \Vert_{L^2[0,1]}^2 dk \nonumber \\
&=M \mathrm{e}^{-ct} \Vert u(0,\cdot,\cdot) \Vert_{L^2(\mathbb{R}\times[0,1])}^2
\end{align}
\end{proof}

This example illustrates a general principle that applies to reaction-diffusion systems in higher dimensions: there exists a finite number $N$ of modes requiring active control, while higher modes remain naturally stable through diffusive damping. This number $N$ depends explicitly on physical parameters ($\epsilon$, $\lambda$) and the desired decay rate $c$.

The control kernel's structure itself reflects this duality of mechanisms:
\begin{equation}
K(x,\xi,\eta) = h(\eta)[2N\text{sinc}(2\pi N(x-\xi))]
\end{equation}
where $h(\eta)$ arises from the backstepping design (containing the modified Bessel function), while the sinc term emerges from spectral truncation. This decomposition reveals how spatial locality interacts with spectral properties of the control law.
These insights prove fundamental when addressing more complex geometries. While the technical details become more involved, the core principles - modal decomposition, natural damping at high frequencies, and the separation into actively controlled and naturally stable modes - persist in other domains. This framework will guide our subsequent analysis of both rectangular domains with finite-dimensional actuation and in sector domains (where radial symmetry replaces translational invariance).

\section{Backstepping Control of Reaction-Diffusion Systems in Square Domains}\label{sec:square}

The square domain presents an interesting intermediate case between the fully spatially invariant systems discussed previously and domains with no spatial symmetries. While translation invariance is apparently lost, the regular geometry still enables powerful decomposition methods that connect to our earlier analysis. We begin by examining a reaction-diffusion system on the unit square, as shown in Fig.~\ref{fig:square}:

\begin{figure}
\centering
\begin{tikzpicture}[scale=2.5]
    % Fill with light blue
    \fill[blue!10] (0,0) rectangle (1,1);
    
    % Draw the square
    \draw[thick] (0,0) rectangle (1,1);
    
    % Label axes
    \draw[->] (-0.2,0) -- (1.3,0) node[right] {$x$};
    \draw[->] (0,-0.2) -- (0,1.3) node[above] {$y$};
    
    % Mark control boundary
    \draw[red,thick] (1,0) -- (1,1) node[midway,right] {$U(t,y)$};
    
    % Mark Dirichlet boundaries
    \draw[blue,thick] (0,0) -- (1,0) node[midway,below] {$u=0$};
    \draw[blue,thick] (0,1) -- (1,1) node[midway,above] {$u=0$};
    \draw[blue,thick] (0,0) -- (0,1) node[midway,left] {$u=0$};
\end{tikzpicture}
\caption{Square domain with boundary control on the right edge}
\label{fig:square}
\end{figure}
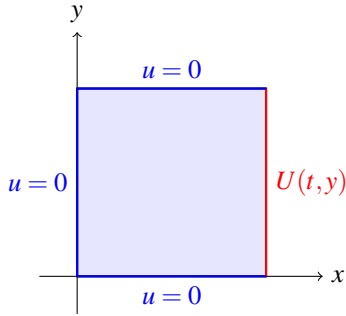

\begin{align}
u_t &= \epsilon(u_{xx} + u_{yy}) + \lambda u, \quad (x,y) \in [0,1]\times[0,1] \label{eqn:usquare}\\
u(t,0,y) &= 0, \quad u(t,1,y) = U(t,y) \quad \text{(control)} \\
u(t,x,0) &= u(t,x,1) = 0 \quad \text{(Dirichlet boundary conditions)} \label{eqn:usquarebc}
\end{align}
For $\epsilon,\lambda>0$. If $\lambda$ is sufficiently large, the system is unstable.

Interestingly, this problem is solvable without much complication due to the simplicity of the geometry and boundary conditions. Indeed, if one follows the backstepping approach, we seek a transformation:
\begin{equation}
w(t,x,y) = u(t,x,y) - \int_0^x k(x,\xi)u(t,\xi,y)d\xi \label{eqn:basic_transf}
\end{equation}
The kernel $k(x,\xi)$ must map our system to a target system with enhanced stability:
\begin{align}
w_t &= \epsilon(w_{xx} + w_{yy}) - cw \\
w(t,0,y) &= w(t,1,y) = w(t,x,0) = w(t,x,1) = 0
\end{align}
Now, deriving the kernel equations, a remarkable feature of this transformation is that it commutes with y-derivatives, allowing the kernel equations to be posed and solved independently of the y coordinate, reaching again the kernel equations:
\begin{align}
K_{xx}(x,\xi)-K_{\eta\eta}(x,\xi) &= \lambda_0 K(x,\xi) \\
K (x,0) &= 0 \\
K(x,x)& = -\frac{\lambda_0 x}{2}
\end{align}
where, as in  Section~\ref{sec:spinv}, $\lambda_0=\frac{\lambda+c}{\epsilon}$,  obtaining again $K(x,\xi)=\lambda_0 \xi \frac{\mathrm{I}_1(\sqrt{\lambda_0(x^2-\xi^2)})}{\sqrt{\lambda_0(x^2-\xi^2)}}$.

Thus as in the semi-infinite strip, we obtain the \emph{localized} control law
\begin{equation}
U(t,y) = \int_0^1 K(1,\xi)u(t,\xi,y)d\xi \label{eq:allmodecontrol}
\end{equation}

We skip the stability result, which is obvious. Instead, we show how to recover this result with modal methods. While this domain lacks the infinite extent that enabled Fourier transform methods in our previous analysis, its regular geometry suggests a natural spectral approach through Fourier series. Specifically, the homogeneous Dirichlet conditions in y motivate expanding the solution in sine series:
\begin{equation}
u(t,x,y) = \sum_{n=1}^{\infty} u_n(t,x)\sin(n\pi y),\quad U(t,y) = \sum_{n=1}^{\infty} U_n(t)\sin(n\pi y)
\end{equation}
This expansion effectively decomposes the 2D problem into countably many 1D problems (an ensemble), each corresponding to a different sine mode. For each mode n:
\begin{align}
u_{n,t} &= \epsilon(u_{xx} - n^2\pi^2 u_n) + \lambda u_n \\
u_n(t,0) &= 0, \quad u_n(t,1) = U_n(t)
\end{align}

The connection to our previous spatial invariance analysis in Section~\ref{sec:spinv} becomes clear: each mode behaves like a 1D reaction-diffusion equation with an additional damping term $-\epsilon n^2\pi^2$ from the y-derivatives and can be controlled independently. This additional damping increases quadratically with the mode number, suggesting that higher modes will be naturally more stable. Despite this property, to recover the control law (\ref{eq:allmodecontrol}) we need to equally damp all modes, and thus we design a target system
\begin{align}
w_{n,t} &= \epsilon(w_{n,xx} - n^2\pi^2 w_n) - cw_n \\
w_n(t,0) &= w_n(t,1) = 0
\end{align}
The backstepping transformation in modal form becomes:
\begin{equation}
w_n(t,x) = u_n(t,x) - \int_0^x K_n(x,\xi)u_n(t,\xi)d\xi
\end{equation}
and we obtain the same kernel equations as before, independent of the mode. Thus $K_n(x,\xi)=K(x,\xi)$ and it has again the familiar form involving modified Bessel functions.

The control input for each mode is then:
\begin{equation}
U_n(t) = \int_0^1 K(1,\xi)u_n(t,\xi)d\xi
\end{equation}
To recover the physical space control law, we substitute these modal controls back into the sine series:
\begin{align}
U(t,y) &= \sum_{n=1}^{\infty} \left(\int_0^1 K(1,\xi)u_n(t,\xi)d\xi\right)\sin(n\pi y)\nonumber   \\
&= \sum_{n=1}^{\infty} \left(\int_0^1 K(1,\xi)\left[2\int_0^1 u(t,\xi,\eta)\sin(n\pi \eta)d\eta\right]d\xi\right)\sin(n\pi y) \nonumber\\
&= \int_0^1 K(1,\xi) )\int_0^1 u(t,\xi,\eta) \left[2\sum_{n=1}^{\infty} \sin(n\pi y) \sin(n\pi \eta)\right]d\eta d\xi\nonumber \\
&= \int_0^1\int_0^1 K(1,\xi)u(t,\xi,\eta)\delta(y-\eta) d\eta d\xi \nonumber \\
&= \int_0^1 K(1,\xi)u(t,\xi,y)d\xi \label{control-law-square}
\end{align}
where we've used the fact that $2\sum_{n=1}^{\infty} \sin(n\pi y)\sin(n\pi \eta) = \delta(y-\eta)$ for $y,\eta \in [0,1]$.
This explicitly shows how the control law remains local in the y coordinate - the Dirac delta ensures that the control at any y only depends on the state at that same y-coordinate.
This remarkable result shows that the control law remains local in the $y$ coordinate, matching exactly what we found through ``direct'' backstepping. The interchange of summation and integration is justified by the regularity of the kernel and the convergence properties of the Fourier series.

The use of Fourier series enables a result that is not possible with the basic transformation (\ref{eqn:basic_transf}), as explained next.

\subsection{Extension to Finite-Dimensional Control}
In practical implementations, it is rarely possible to apply arbitrary control functions $U(t,y)$ along the boundary. Instead, control actuation is typically constrained to be finite-dimensional:
\begin{equation}
U(t,y) = \sum_{k=1}^m U_k(t)\phi_k(y)
\end{equation}
where ${\phi_k(y)}_{k=1}^m$ are prescribed shape functions on $[0,1]$ and $U_k(t)$ are the $m$ control inputs. In this case, it is no longer possible to apply the ``direct'' backstepping transformation  (\ref{eqn:basic_transf})  whereas spatial invariance methods are applicable and allow to find conditions under which the problem is solvable.

Indeed, these shape functions can be expanded in the same sine basis:
\begin{equation}
\phi_k(y) = \sum_{n=1}^{\infty} \phi_{k,n}\sin(n\pi y)
\end{equation}
where $\phi_{k,n}$ are the Fourier coefficients of the shape functions.
For each mode $n$, the boundary condition at $x=1$ becomes:
\begin{equation}
u_n(t,1) = \sum_{k=1}^m U_k(t)\phi_{k,n}
\end{equation}
However, our spectral analysis reveals that not all modes require active control. Indeed, examining the uncontrolled dynamics of each mode:
\begin{equation}
u_{n,t} = \epsilon(u_{xx} - n^2\pi^2 u_n) + \lambda u_n
\end{equation}
we observe that the term $-\epsilon n^2\pi^2$ provides increasingly strong natural damping for higher modes. Specifically, for any desired decay rate $c > 0$, modes with $n > \sqrt{\frac{c+\lambda}{\pi^2\epsilon}}$ will decay faster than $e^{-ct}$ even without control.
This observation leads to a fundamental result about finite-dimensional control: to achieve a decay rate $c$, we only need enough actuators to control modes up to some $N> N_0 = \sqrt{\frac{c+\lambda}{\pi^2\epsilon}}$. The control law takes the form:
\begin{equation}
\begin{pmatrix} U_1(t) \\ \vdots \\ U_m(t) \end{pmatrix} =
\Phi^\dagger \begin{pmatrix} g_1(t) \\ \vdots \\ g_N(t) \end{pmatrix}
\end{equation}
where $\Phi$ is the matrix of shape function coefficients:
\begin{equation}
\Phi = \begin{pmatrix}
\phi_{1,1} & \cdots & \phi_{m,1} \\
\vdots & \ddots & \vdots \\
\phi_{1,N} & \cdots & \phi_{m,N}
\end{pmatrix}
\end{equation}
and
\begin{equation}
g_n(t) = \int_0^1 K(1,\xi)u_n(t,\xi)d\xi
\end{equation}
In addition, $\Phi^\dagger$ referes to the Moore-Penrose pseudoinverse. It exists and provides a meaningful control law precisely when $\Phi$ has full row rank $N$. This means the shape functions must be able to independently actuate each of the first $N$ modes. Mathematically, this requires $m \geq N$ actuators (a necessary but not sufficient condition), and the rows of $\Phi$ must be linearly independent. When these conditions are met, $\Phi^\dagger = \Phi^\top(\Phi\Phi^\top)^{-1}$, providing the exact minimum-norm solution to the modal control problem.
Note thought that the condition number of $\Phi$ may play an important role in implementation. 

If shape functions can be chosen, their choide presents an important practical tradeoff. While sinusoidal shape functions $\phi_k(y) = \sin(k\pi y)$ are mathematically optimal, leading to $\Phi$ being a submatrix of the identity, they may be challenging to implement physically. A more practical alternative is to use localized actuators:
\begin{equation}
\phi_k(y) = \begin{cases}
1 & \text{if } \frac{k-1}{m} \leq y \leq \frac{k}{m} \\
0 & \text{otherwise}
\end{cases}
\end{equation}
These piecewise constant functions correspond to independent actuators placed along the boundary. Their Fourier coefficients can be computed explicitly:
\begin{equation}
\phi_{k,n} = \frac{2}{n\pi}\left[\cos(n\pi\frac{k-1}{m}) - \cos(n\pi\frac{k}{m})\right]
\end{equation}
For these piecewise constant actuators, the condition number grows approximately linearly with $N$, reflecting the inherent challenge of controlling higher modes with localized actuators. This growth in condition number manifests physically as increasing sensitivity to measurement noise and modeling uncertainties for higher modes.

The complete physical space implementation requires computing the Fourier coefficients of the state through:
\begin{equation}
u_n(t,x) = 2\int_0^1 u(t,x,\eta)\sin(n\pi \eta)d\eta
\end{equation}
The finite-dimensional control law then becomes:
\begin{align}
U(t,y) &= \sum_{k=1}^m \phi_k(y)\left[\Phi^\dagger \begin{pmatrix}
\int_0^1 K(1,\xi)\left(2\int_0^1 u(t,\xi,\eta)\sin(\pi \eta)d\eta\right)d\xi \\
\vdots \\
\int_0^1 K(1,\xi)\left(2\int_0^1 u(t,\xi,\eta)\sin(N\pi \eta)d\eta\right)d\xi
\end{pmatrix}\right]_k \label{eqn:findimlaw}
\end{align}
This reveals an elegant interplay between spatial and spectral properties: while each actuator has localized spatial influence through $\phi_k(y)$, its effect is distributed across modes through $\Phi^\dagger$. The minimum number of actuators needed scales with the square root of the desired decay rate, a fundamental limitation inherent to the parabolic nature of the system.

The main result is given next; the use of truncation necessitates the use of the $H^1$ norm in this case.

\begin{theorem}[Stability with Finite-Dimensional Control]
Given a desired decay rate $c > 0$, let $N\in \mathbb{N}$ be an integer such that $N\geq N_0 = \sqrt{\frac{c+\lambda}{\pi^2\epsilon}}$ and consider the closed-loop system (\ref{eqn:usquare})--(\ref{eqn:usquarebc}) with $m$ actuators as given by (\ref{eqn:findimlaw}). Assume as well the shape function $\phi_n(x)$ to be in $L^2([0,1])$. If $\text{rank}(\Phi) = N$, then the equilibrium $u(t,x,y)\equiv 0$ achieves exponential stability with decay rate $c$ in the $H^1$ norm.
\begin{equation}
\Vert u(t,\cdot,\cdot)\Vert_{H^1([0,1]^2)} \leq M e^{-ct}\Vert u(0,\cdot,\cdot)\Vert_{H^1([0,1]^2)}
\end{equation}
for $M>0$.
\end{theorem}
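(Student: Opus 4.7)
The plan is to follow the two-step template of Theorem~\ref{th-stab}, splitting the modal decomposition into the band $n\leq N$ that is exactly targeted by the pseudoinverse and the band $n>N$ that is naturally overdamped, while carefully accounting for the spillover of the finite-dimensional actuation onto the uncontrolled modes.

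After expanding $u(t,x,y)=\sum_{n\geq 1} u_n(t,x)\sin(n\pi y)$, I would observe that the pseudoinverse construction in (\ref{eqn:findimlaw}) is designed so that the first $N$ modal boundary traces read $u_n(t,1)=g_n(t)=\int_0^1 K(1,\xi)u_n(t,\xi)d\xi$ exactly; this is precisely where the rank assumption on $\Phi$ enters the argument. For $n>N$ the trace is not zero but the residual $u_n(t,1)=\sum_{k=1}^m U_k(t)\phi_{k,n}$, which must be tracked as a (state-dependent) boundary forcing throughout the rest of the proof.

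For controlled modes, I would apply the modal backstepping transformation of this section which, since the kernel $K$ is $n$-independent, maps each mode onto the target $w_{n,t}=\epsilon(w_{n,xx}-n^2\pi^2 w_n)-c w_n$ with zero Dirichlet data in $x$. Using the augmented Lyapunov functional $V_n=\tfrac{1}{2}\|w_n\|^2_{L^2[0,1]}+\tfrac{\epsilon}{2}\|w_{n,x}\|^2_{L^2[0,1]}$ rather than the plain $L^2$ version of Theorem~\ref{th-stab} (because the final norm is $H^1$), integration by parts and the boundary conditions of $w_n$ yield $\dot V_n\leq -2cV_n$. Inverting the uniformly bounded Volterra transformation transfers this decay to $u_n$ in $H^1(0,1)$ with an $n$-independent prefactor.

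For $n>N$ I would lift the inhomogeneous trace, e.g.\ via $\tilde u_n=u_n-x\sum_k U_k(t)\phi_{k,n}$, and derive an $H^1$-type energy estimate for the resulting PDE with zero traces at $x=0,1$. The natural damping rate $\epsilon n^2\pi^2-\lambda$ exceeds $c$ strictly for $n>N_0$, and the $\phi_{k,n}$-forcing is controlled by $|U_k(t)|$ which, via $\Phi^\dagger$ acting on the $g_n(t)$ from Step~2, is bounded by $Ce^{-ct}\|u(0,\cdot,\cdot)\|_{H^1}$. The main obstacle lies precisely here: the $n^2$ weight inherent to the $H^1$ norm must be reconciled with the only $\ell^2$-summability of the Fourier coefficients $\phi_{k,n}$ of actuators that are merely in $L^2([0,1])$. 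The saving grace is the quadratic margin $\epsilon n^2\pi^2-\lambda-c$, which grows with $n$ and can be made to absorb both the lift-induced interior source and the extra factor of $n^2$ appearing in the $H^1$ weighting; this is exactly why the pure $L^2$ bound of Theorem~\ref{th-stab} has to be upgraded to $H^1$ in the presence of truncation. A final application of Parseval's identity for the sine basis in $y$, combining the controlled and uncontrolled estimates, closes the bound with some constant $M>0$ and yields the claimed exponential $H^1$ decay at rate $c$.
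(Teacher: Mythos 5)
Your proposal is correct in its overall architecture and identifies the same central difficulty as the paper --- the spillover $u_n(t,1)=\sum_{k}U_k(t)\phi_{k,n}\neq 0$ on the residual modes $n>N$, together with the fact that the rank condition makes $u_n(t,1)=g_n(t)$ exactly for $n\le N$ --- but it resolves that difficulty by a genuinely different route. The paper keeps the inhomogeneous trace and handles the resulting boundary terms $\epsilon u_{nx}(1,t)U_n(t)$ and $\epsilon u_{nx}(1,t)U_{nt}(t)$ directly, via the trace bound $u_{nx}^2(1,t)\le 2\left(\int_0^1 u_{nxx}^2\,dx+\int_0^1u_{nx}^2\,dx\right)$ and Young's inequality, and then closes the argument with a single weighted Lyapunov functional $V=\alpha\sum_{n\le N}V_n+\sum_{n>N}V_n$, choosing $\alpha$ large enough that the surplus negativity of the controlled block absorbs the spillover terms (which, as you observe, depend only on $w_1,\dots,w_N$ since the feedback uses only the first $N$ modes). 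You instead homogenize the trace with the lifting $\tilde u_n=u_n-x\sum_kU_k\phi_{k,n}$ and run a cascade: first the decay of the controlled block, then a variation-of-constants estimate for the overdamped block driven by an $e^{-ct}$ forcing. Both are legitimate since the closed loop is triangular; your cascade makes that structure explicit, while the paper's coupled functional avoids the lifting-induced interior sources $x\dot U_n$ and $(\lambda-\epsilon n^2\pi^2)xU_n$ and delivers a clean $\dot V\le-2cV$. One caution: the difficulty you flag yourself --- reconciling the $n^2$ weight of the $\partial_y$ part of the $H^1$ norm with the mere $\ell^2$ summability of $\phi_{k,n}$ --- is not in fact dissolved by the quadratic damping margin. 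The variation-of-constants gain from forcing to state is $O(1)$ in $n$, so one gets $n^2\|u_n\|^2\lesssim e^{-2ct}\,n^2\sum_k|\phi_{k,n}|^2$, whose sum over $n$ diverges for, e.g., the piecewise-constant actuators where $\phi_{k,n}=O(1/n)$. The paper sidesteps this by building its functional from $\|z_n\|^2+\|z_{n,x}\|^2$ without $n^2$ weights (so it genuinely controls $\|u\|_{L^2}^2+\|u_x\|_{L^2}^2$) and appealing to ``norm equivalences'' at the end; if the full $H^1([0,1]^2)$ norm including $u_y$ is insisted upon, both your argument and the paper's would need $\phi_k\in H^1$ or an additional smoothing step.
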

\begin{proof}
The proof proceeds as the one of Theorem~\ref{th-stab} by analyzing separately the controlled and uncontrolled modes.

Define
\begin{eqnarray}
V_{1,n} &=& \frac{1}{2}\int_0^1 z_n^2(x,t)dx \\
V_{2,n} &=& \frac{1}{2}\int_0^1 z_{n,x}^2(x,t)dx
\end{eqnarray}
where $z_n=w_n$ if $n\leq N$, and $z_n=u_n$ if $n>N$.

For the controlled modes ($n\leq N$), we have
\begin{align}
w_{nt} &= \epsilon(-n^2\pi^2 w_n + w_{nyy}) -c w_n \\
w_n(t,0) &= 
w_n(t,1) = 0
\end{align}

It is straightforward to obtain, from the boundary conditions and integration by parts, that
\begin{align}
\dot{V}_{1,n} &= -\epsilon \int_0^1 w_{nx}^2 \, dx - \epsilon n^2\pi^2 \int_0^1 w_n^2 \, dx - c \int_0^1 w_n^2 \, dx \nonumber\\
&= -2\epsilon V_{2,n} - 2 (\epsilon n^2\pi^2 + c) V_{1,n}
\end{align}
and
\begin{align}
\dot{V}_{2,n} &= -\epsilon \int_0^1 w_{nxx}^2 \, dx - \epsilon n^2\pi^2 \int_0^1 w_{nx}^2 \, dx - c \int_0^1 w_{nx}^2 \, dx \nonumber\\
&= -\epsilon \int_0^1 w_{nxx}^2 \, dx - 2 (\epsilon n^2\pi^2 + c) V_{2,n}
\end{align}

For the uncontrolled modes, we have
\begin{align}
u_{nt} &= \epsilon(-n^2\pi^2 u_n + u_{nyy}) + \lambda u_n \\
u_n(t,0) &= 0 \\
u_n(t,1) &= U_n(t)\end{align}
with
\begin{equation}
 U_n(t)= \sum_{k=1}^m \phi_{k,n} \left[\Phi^\dagger \begin{pmatrix}
\int_0^1 K(1,\xi)u_1(t,\xi) d\xi \\
\vdots \\
\int_0^1 K(1,\xi)u_N(t,\xi) d\xi
\end{pmatrix}\right]_k=\sum_{k=1}^m \phi_{k,n} \left[\Phi^\dagger \begin{pmatrix}
\int_0^1 L(1,\xi)w_1(t,\xi) d\xi \\
\vdots \\
\int_0^1 L(1,\xi)w_N(t,\xi) d\xi
\end{pmatrix}\right]_k
\end{equation}
where we have used the inverse transformation to express the boundary feedback law in terms of the target variables.

Note the following identities:
\begin{eqnarray}
 \vert U_n(t) \vert^2  \leq  Nm \Vert L \Vert_{L^\infty}^2 \Vert \Phi^\dagger  \Vert^2   \sum_{j=1}^N \sum_{k=1}^m \vert \phi_{k,n}  \vert^2  \int_0^1  w_j^2(t,\xi) d\xi 
 \leq C_{n} \sum_{j=1}^N V_{1,j}
\end{eqnarray}
and
\begin{eqnarray}
 \vert U_{nt}(t) \vert^2  &\leq & Nm \Vert L \Vert_{L^\infty}^2 \Vert \Phi^\dagger  \Vert^2   \sum_{n=1}^N \sum_{k=1}^m \vert \phi_{k,n}  \vert^2  \int_0^1  w_{nt}^2(t,\xi) d\xi  \nonumber \\
 &\leq& C_{n}  \sum_{j=1}^N  
 \left[ 
 \epsilon^2  \int_0^1  w_{jxx}^2(t,\xi) d\xi   +  (\epsilon n^2\pi^2 + c)^2 V_{1,j}
 \right]
\end{eqnarray}
for a finite constant $C_n$.

Note as well that, by the Parseval identity, if the shape functions are well-behaved, e.g. in $L^2$, we obtain
\begin{equation}
\sum_{n=1}^\infty C_n\leq C
\end{equation}

Computing now the derivative of the Lyapunov functions for the uncontrolled modes, we obtain
\begin{align}
\dot{V}_{1,n} &= -\epsilon \int_0^1 u_{nx}^2 \, dx - \epsilon n^2\pi^2 \int_0^1 u_n^2 \, dx +\lambda  \int_0^1 u_n^2 \, dx 
+\epsilon u_{nx}(1,t) U_n(t)
\nonumber\\
&= -2\epsilon V_{2,n} - 2 (\epsilon n^2\pi^2 - \lambda) V_{1,n}+\epsilon u_{nx}(1,t) U_n(t)
\end{align}
and
\begin{align}
\dot{V}_{2,n} &= -\epsilon \int_0^1 u_{nxx}^2 \, dx - \epsilon n^2\pi^2 \int_0^1 u_{nx}^2 \, dx +\lambda \int_0^1 u_{nx}^2 \, dx+\epsilon u_{nx}(1,t) U_{nt}(t) \nonumber\\
&= -\epsilon \int_0^1 u_{nxx}^2 \, dx - 2 (\epsilon n^2\pi^2 -\lambda) V_{2,n} +\epsilon u_{nx}(1,t) U_{nx}(t)
\end{align}

Now we can use the fact that
 \begin{equation}
 f(1)=\int_0^1 \left[\frac{d}{dx} xf(x)\right] dx =\int_0^1 f(x) dx + \int_0^1 xf'(x) dx
 \end{equation}
 which squared and applying several inequalities gives
 \begin{equation}
 f^2(1) \leq 2 \left( \int_0^1 f^2(x) dx + \int_0^1 f_x^2(x) dx \right)
 \end{equation}
 Thus we get the classical bound for a trace:
 \begin{equation} u^2_x(1,t) \leq 2 \left(\int_0^1 u_{xx}^2 (x,t) dx+\int_0^1 u_{x}^2 (x,t) dx\right)
 \end{equation}
 
 Then, the Lyapunov functions for the uncontrolled modes can be bounded as follows:
 \begin{align}
\dot{V}_{1,n} &\leq -2\epsilon V_{2,n} - 2 (\epsilon n^2\pi^2 - \lambda) V_{1,n}+
\delta_1 \epsilon   \left(\int_0^1 u_{nxx}^2 (x,t) dx+V_{2n} \right)
+\frac{\epsilon}{2 \delta_1} \vert  U_n(t) \vert ^2
\end{align}
and
\begin{align}
\dot{V}_{2,n} &\leq-\epsilon \int_0^1 u_{nxx}^2 \, dx - 2 (\epsilon n^2\pi^2 -\lambda) V_{2,n} +
\delta_2 \epsilon   \left(\int_0^1 u_{nxx}^2 (x,t) dx+V_{2n} \right)
+\frac{\epsilon}{2 \delta_2} \vert  U_{nt}(t) \vert ^2
\end{align}

Consider now $V_n= V_1 + V_2$.

For the controlled modes $n\leq N$ we obtain
\begin{align}
\dot V_n &=-2 (\epsilon (n^2\pi^2+1) + c) V_{2,n} - 2 (\epsilon n^2\pi^2 + c) V_{1,n} -\epsilon \int_0^1 w_{nxx}^2 \, dx \nonumber \\
&\leq -2c V_n- 2 \epsilon \pi^2  V_{1,n}
 -\epsilon \int_0^1 w_{nxx}^2 \, dx
\end{align}
For the uncontrolled modes $n>N$ we obtain  
 \begin{align}
\dot{V}_{n} \leq &
-(2 \epsilon+ 2 (\epsilon n^2\pi^2 -\lambda) -\delta_2 \epsilon-\delta_1 \epsilon) V_{2,n} - 2  (\epsilon n^2\pi^2 - \lambda) V_{1,n} \nonumber \\
&
-\epsilon (1-\delta_1-\delta_2) \int_0^1 u_{nxx}^2 \, dx 
+\frac{ \epsilon}{2 \delta_1} \vert  U_n(t) \vert ^2+\frac{\epsilon}{2 \delta_2} \vert  U_{nt}(t) \vert ^2
\end{align}

Note that since for all uncontrolled modes  $n>N\geq N_0 = \sqrt{\frac{c+\lambda}{\pi^2\epsilon}}$, we get $n^2 > \frac{c+\lambda}{\pi^2\epsilon}$ and therefore $\epsilon n^2\pi^2 -\lambda> c$. Then, choosing $\delta_1=\delta_2=\frac{1}{4}$
 \begin{align}
\dot{V}_{n} \leq &
-\left(\frac{3}{2}  \epsilon+ 2 c \right) V_{2,n} - 2  c V_{1,n} 
-\frac{\epsilon}{2}  \int_0^1 u_{nxx}^2 \, dx 
+2 \epsilon \vert  U_n(t) \vert ^2+2 \epsilon \vert  U_{nt}(t) \vert ^2
\nonumber \\
\leq & -2c V_n +2 \epsilon ( \vert  U_n(t) \vert ^2+ \vert  U_{nt}(t) \vert ^2)
\end{align}

Consider now 
\begin{equation}
V(t)=\alpha \sum_{n=1}^N V_n(t) + \sum_{n=N+1}^\infty V_n(t) 
\end{equation}
We obtain
\begin{equation}
\dot V \leq -2c V  - 2
\left( 
\alpha \epsilon \pi^2
-C \epsilon (1+\epsilon N^2\pi^2 + c)^2
\right)\sum_{n=1}^N 
 V_{1,n}
 -\left(
 \alpha \epsilon
 -2\epsilon^3 C
 \right)
  \sum_{n=1}^N  \int_0^1 w_{nxx}^2 \, dx 
  \end{equation}
Now choosing $\alpha >  \frac{C  (1+\epsilon N^2\pi^2 + c)^2}{\pi^2}$ we obtain $\dot V \leq -2 c V$. Through norm equivalences, the direct and inverse transformation and the Parseval identity, the result follows.

\end{proof}

In this section we have established a complete framework for finite-dimensional control of the reaction-diffusion equation on a square domain. The framework provides explicit conditions for stabilizability, constructive control laws, and quantitative performance guarantees. Most importantly, it reveals the fundamental tradeoffs between number of actuators, achievable performance, and robustness. These results will prove crucial as we move to more complex geometries where similar modal decompositions arise naturally.

\section{Control of Reaction-Diffusion Equations on Sector Domains}\label{sec:pizza}

Reaction-diffusion processes in non-rectangular domains arise in many physical applications, including heat transfer in pie-shaped regions, chemical diffusion in tapered channels, and neuronal signal propagation in wedge-shaped tissues. In this section, we extend our control methodology to a sector domain, which presents unique challenges due to its radial geometry.

Consider a reaction-diffusion equation on a sector domain, described in polar coordinates $(r,\theta)$ with $r \in [0,R]$ and $\theta \in [\theta_1,\theta_2]$:

\begin{align}
u_t &= \epsilon\left(u_{rr} + \frac{1}{r}u_r + \frac{1}{r^2}u_{\theta\theta}\right) + \lambda u \\
u(t,r,\theta_1) &= u(t,r,\theta_2) = 0 \quad \text{(no flux at edges)} \\
u(t,R,\theta) &= U(t,\theta) \quad \text{(control at outer radius)}
\end{align}

Here, $\epsilon > 0$ represents the diffusion coefficient, $\lambda$ is the reaction coefficient (which may be positive, making the system unstable in open loop), and $U(t,\theta)$ is our control input applied at the outer radius $r = R$. The first equation describes the reaction-diffusion dynamics, with the Laplacian operator expressed in polar coordinates.

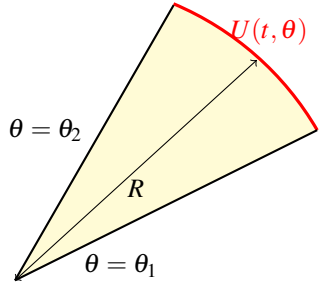
\begin{figure}[!t]
\centering
\begin{tikzpicture}[scale=2]
    % Fill pizza with light yellow
    \fill[yellow!20] (0,0) -- (2,1) arc(30:67:2) -- cycle;
    
    % Draw crust (red control)
    \draw[red, very thick] (2,1) arc(30:67:2) node[midway, above] {$~~~~U(t,\theta)$};
    
    % Draw edges
    \draw[thick] (0,0) -- (2,1);
    \draw[thick] (0,0) -- (1.05,1.832);
    
    % Label angles
    \draw (0.7,0.1) node {$\theta=\theta_1$};
    \draw (0.2,1) node {$\theta=\theta_2$};
    
    % Draw radius
    \draw[<->] (0,0) -- (1.6,1.466) node[midway, below] {$R$};
\end{tikzpicture}
\caption{Sector domain with boundary control at the outer radius. The control $U(t,\theta)$ is applied along the curved boundary (shown in red), while homogeneous Dirichlet conditions are imposed on the straight edges.}
\label{fig:sector}
\end{figure}

Figure \ref{fig:sector} illustrates our control setup. The domain resembles a pizza slice with control applied only at the outer curved boundary (the "crust"), depicted in red. No control is available at the straight edges or at the origin.

Our control strategy leverages the natural symmetries of the domain. The first step is to decompose the solution into angular modes, which allows us to transform the 2D problem into a set of 1D problems.

\begin{proposition}[Angular Eigenfunctions]
The functions
\begin{equation}
\Phi_n(\theta) = \sin\left(\frac{n\pi(\theta - \theta_1)}{\theta_2-\theta_1}\right), \quad n=1,2,\ldots
\end{equation}
form a complete orthonormal set in $L^2([\theta_1,\theta_2])$ subject to the boundary conditions $\Phi(\theta_1) = \Phi(\theta_2) = 0$.
\end{proposition}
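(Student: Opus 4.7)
The plan is to reduce the statement to the classical Fourier sine basis on $[0,\pi]$ via a simple affine change of variables, then invoke standard results. Let $\phi = \pi(\theta-\theta_1)/(\theta_2-\theta_1)$, which maps $[\theta_1,\theta_2]$ bijectively onto $[0,\pi]$ with constant Jacobian $d\phi/d\theta = \pi/(\theta_2-\theta_1)$. Under this pullback, $\Phi_n(\theta)$ becomes $\sin(n\phi)$, the classical Dirichlet sine basis. Moreover, the induced map between $L^2([\theta_1,\theta_2])$ and $L^2([0,\pi])$ is an isomorphism (an isometry after normalizing by a constant), so orthogonality, orthonormality, and completeness transfer one-to-one.

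For Step 1 (orthogonality and normalization), I would compute directly
\begin{equation}
\int_{\theta_1}^{\theta_2} \Phi_n(\theta)\Phi_m(\theta)\,d\theta
= \frac{\theta_2-\theta_1}{\pi}\int_0^\pi \sin(n\phi)\sin(m\phi)\,d\phi
= \frac{\theta_2-\theta_1}{2}\,\delta_{nm},
\end{equation}
using the standard trigonometric identity $2\sin(n\phi)\sin(m\phi)=\cos((n-m)\phi)-\cos((n+m)\phi)$ and integrating termwise. This shows orthogonality; strict orthonormality requires rescaling by $\sqrt{2/(\theta_2-\theta_1)}$ (a minor clarification to the statement), a point I would note in passing. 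It is also straightforward to check that each $\Phi_n$ satisfies the homogeneous Dirichlet boundary conditions $\Phi_n(\theta_1)=\Phi_n(\theta_2)=0$ and is an eigenfunction of $-d^2/d\theta^2$ with eigenvalue $n^2\pi^2/(\theta_2-\theta_1)^2$.

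For Step 2 (completeness), I would argue in two equivalent ways, picking whichever fits the exposition. The first is the transfer argument: given any $f\in L^2([\theta_1,\theta_2])$, its pullback $\tilde f(\phi)=f(\theta_1+(\theta_2-\theta_1)\phi/\pi)$ lies in $L^2([0,\pi])$ and, by the classical completeness of $\{\sin(n\phi)\}_{n\geq 1}$ on $[0,\pi]$, admits a convergent Fourier sine expansion. Pushing this expansion forward through the change of variables yields the corresponding convergent expansion of $f$ in the $\Phi_n$. The second, more structural route, is to recognize the $\Phi_n$ as the eigenfunctions of the self-adjoint Sturm--Liouville operator $-d^2/d\theta^2$ on $L^2([\theta_1,\theta_2])$ with Dirichlet endpoints; since this operator has compact resolvent, the spectral theorem guarantees that its eigenfunctions form a complete orthonormal basis.

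The expected main obstacle is essentially nonexistent: the hardest substantive ingredient is the completeness of the classical sine basis, which is a textbook fact (e.g., from basic Sturm--Liouville theory or Fourier analysis). The only subtlety worth flagging in the write-up is the normalization constant $\sqrt{2/(\theta_2-\theta_1)}$ needed to make the family genuinely orthonormal rather than merely orthogonal.
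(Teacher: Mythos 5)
Your proof is correct. The paper states this proposition without any proof, treating it as a standard fact, so there is no argument in the text to compare against; your reduction to the classical Dirichlet sine basis on $[0,\pi]$ via the affine change of variables, together with either the transfer of completeness or the Sturm--Liouville/compact-resolvent argument, is exactly the standard justification one would supply. Your observation that the $\Phi_n$ as written are only orthogonal, and require the normalization factor $\sqrt{2/(\theta_2-\theta_1)}$ to be genuinely orthonormal, is a legitimate (minor) correction to the statement as given; it is consistent with the factor of $2$ that later appears explicitly in the paper's formula for the composite kernel $K(r,\rho,\theta,\eta)$, which compensates for precisely this normalization.
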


These eigenfunctions correspond to the natural vibrational modes in the angular direction, analogous to the standing waves on a string with fixed endpoints. The scaling factor $\frac{\pi}{\theta_2-\theta_1}$ adjusts for the angular span of the sector.

Using this basis, we can expand both the solution and control input as:
\begin{align}
u(t,r,\theta) &= \sum_{n=1}^{\infty} u_n(t,r)\sin\left(\frac{n\pi(\theta - \theta_1)}{\theta_2-\theta_1}\right) \\
U(t,\theta) &= \sum_{n=1}^{\infty} U_n(t)\sin\left(\frac{n\pi(\theta - \theta_1)}{\theta_2-\theta_1}\right)
\end{align}

Substituting these expansions into the original PDE and exploiting the orthogonality of the angular eigenfunctions, we obtain a separate radial equation for each mode $n$:

\begin{align}
u_{n,t} &= \epsilon\left(\frac{1}{r}(ru_{n,r})_r - \frac{n^2\pi^2}{(\theta_2-\theta_1)^2}\frac{u_n}{r^2}\right) + \lambda u_n \\
u_n(t,R) &= U_n(t)
\end{align}

Note that each mode experiences an effective reaction term that combines the original reaction coefficient $\lambda$ with a mode-dependent term arising from the angular derivatives. The term $\frac{n^2\pi^2}{(\theta_2-\theta_1)^2}\frac{1}{r^2}$ represents the centrifugal effect that increases with both the mode number $n$ and the proximity to the origin (as $r$ decreases). 

To stabilize the system, we apply the backstepping methodology to each radial mode. For the $n$-th mode, we design a transformation:

\begin{equation}
w_n(t,r) = u_n(t,r) - \int_0^r k_n(r,\rho)u_n(t,\rho)d\rho
\end{equation}

The goal of this transformation is to map our original system into a target system with desired stability properties:

\begin{align}
w_{n,t} &= \epsilon\left(\frac{1}{r}(rw_{n,r})_r - \frac{n^2\pi^2}{(\theta_2-\theta_1)^2}\frac{w_n}{r^2}\right) - cw_n \\
w_n(t,R) &= 0
\end{align}

where $c > 0$ is our desired decay rate. This target system is exponentially stable with decay rate $c$, as the original diffusion operator is enhanced with an additional damping term $-cw_n$.

Following the standard backstepping procedure (substituting the transformation into the original PDE and matching terms with the target system), we determine that the kernel $k_n$ must satisfy the following PDE:

\begin{align}
\frac{\partial^2 k_n}{\partial r^2} + \frac{1}{r}\frac{\partial k_n}{\partial r}
-\frac{\partial^2 k_n}{\partial \rho^2} - \frac{1}{\rho}\frac{\partial k_n}{\partial \rho}
-\frac{k_n}{\rho^2} 
-\alpha_n^2\left(\frac{1}{r^2}-\frac{1}{\rho^2}\right)k_n(r,\rho)
&= \frac{\lambda+c}{\epsilon}k_n(r,\rho) \\
k_n(r,r) &= -\frac{\lambda+c}{2\epsilon r}
\end{align}
where $\alpha_n=\frac{n\pi}{\theta_2-\theta_1}$ is the angular eigenvalue for mode $n$.

Solving the kernel equation directly is challenging due to its variable coefficients. However, by introducing a change of variables motivated by the sector geometry, we can transform it into a more tractable form.

Let $k_n(r,\rho) = g_n(r,\rho)\rho\left(\frac{\rho}{r}\right)^{\alpha_n}$. This transformation captures the geometric scaling inherent in the sector domain. Substituting into the kernel equation yields:

\begin{align}
\partial_{rr}g_n + (1-2\alpha_n)\frac{\partial_r g_n}{r}
-\partial_{\rho\rho}g_n - (1+2\alpha_n)\frac{\partial_\rho g_n}{\rho}
&= \frac{\lambda+c}{\epsilon}g_n \\
g_n(r,r) &= -\frac{\lambda+c}{2\epsilon}
\end{align}

This transformed equation has a remarkable property, captured in the following proposition:

\begin{proposition}[Explicit Kernel Solution]
The function $g_n$ is independent of $n$ and given by:
\begin{equation}
g_n(r,\rho) = -\frac{\lambda+c}{\epsilon}
\frac{I_1\left[\sqrt{\frac{\lambda+c}{\epsilon}(r^2-\rho^2)}\right]}{\sqrt{\frac{\lambda+c}{\epsilon}(r^2-\rho^2)}}
\end{equation}
where $I_1$ is the modified Bessel function of the first kind.

Therefore, the complete kernel is:
\begin{equation}
k_n(r,\rho) = -\frac{\lambda+c}{\epsilon}\rho
\left(\frac{\rho}{r}\right)^{\frac{n\pi}{\theta_2-\theta_1}}
\frac{I_1\left[\sqrt{\frac{\lambda+c}{\epsilon}(r^2-\rho^2)}\right]}{\sqrt{\frac{\lambda+c}{\epsilon}(r^2-\rho^2)}}
\end{equation}
\end{proposition}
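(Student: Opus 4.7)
The plan is to look for a self-similar solution depending only on the combination $s = r^2 - \rho^2$ and then show that, under this ansatz, the mode index $n$ drops out entirely, reducing the PDE to a standard modified Bessel ODE.

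First I would substitute the ansatz $g_n(r,\rho) = G(s)$ with $s = r^2 - \rho^2$. By the chain rule,
\begin{align*}
\partial_r g_n &= 2 r\, G'(s), & \partial_\rho g_n &= -2 \rho\, G'(s),\\
\partial_{rr} g_n &= 2 G'(s) + 4 r^2 G''(s), & \partial_{\rho\rho} g_n &= -2 G'(s) + 4 \rho^2 G''(s).
\end{align*}
The first-derivative terms of the transformed kernel equation therefore combine as
\[
(1-2\alpha_n)\frac{\partial_r g_n}{r} - (1+2\alpha_n)\frac{\partial_\rho g_n}{\rho} = 2(1-2\alpha_n)G'(s) + 2(1+2\alpha_n)G'(s) = 4 G'(s),
\]
so the $\alpha_n$ contributions cancel exactly. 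Adding the second-derivative part, $\partial_{rr} g_n - \partial_{\rho\rho} g_n = 4 G' + 4 s\, G''$, collapses the full left-hand side to $4 s\, G''(s) + 8\, G'(s)$. Setting $\mu := (\lambda+c)/\epsilon$, the PDE reduces to the $n$-independent linear ODE
\[
s\, G''(s) + 2\, G'(s) - \frac{\mu}{4}\, G(s) = 0, \qquad G(0) = -\frac{\mu}{2}.
\]

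Next I would solve this ODE by passing to $z = \sqrt{\mu s}$ and writing $G(s) = H(z)/z$; a short computation recasts the equation as the modified Bessel equation of order one, $z^2 H''(z) + z H'(z) - (z^2 + 1) H(z) = 0$. Requiring $G$ to be regular at $s = 0$ rules out the $K_1$ branch, leaving $H(z) = c\, I_1(z)$ and hence $G(s) = c\, I_1(\sqrt{\mu s})/\sqrt{\mu s}$. The small-argument asymptotic $I_1(z) \sim z/2$ yields $G(0) = c/2$, so matching the diagonal condition forces $c = -\mu$. Undoing the change of variables $k_n = g_n\, \rho\, (\rho/r)^{\alpha_n}$ then recovers the explicit expression stated in the proposition.

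To conclude that this self-similar candidate is truly \emph{the} backstepping kernel rather than just one admissible solution, I would invoke the standard well-posedness theory for Goursat-type kernel equations as in \cite{krstic,survey}: the transformed equation, together with the diagonal condition and the regularity at $\rho = 0$ inherited through the $(\rho/r)^{\alpha_n}$ prefactor, determines $g_n$ uniquely. The central difficulty -- and the step doing all the work -- is the initial observation that the $\alpha_n$-dependent terms cancel precisely under the separation ansatz $g_n = G(r^2 - \rho^2)$; once this cancellation is noticed, the rest reduces to textbook ODE analysis and the asymptotics of $I_1$.
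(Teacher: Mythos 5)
Your proof is correct, and it follows exactly the route the paper intends: the paper states this proposition without supplying a proof, implicitly relying on the same reduction you carry out, namely the similarity ansatz $g_n = G(r^2-\rho^2)$ under which the $\alpha_n$-dependent first-order terms cancel and the equation collapses to the order-one modified Bessel ODE already familiar from the classical 1-D kernel $\lambda_0\xi\, \mathrm{I}_1\bigl(\sqrt{\lambda_0(x^2-\xi^2)}\bigr)/\sqrt{\lambda_0(x^2-\xi^2)}$ cited throughout. Your verification of the cancellation, the regularity argument excluding the $K_1$ branch, and the normalization $G(0)=-\mu/2$ are all accurate and supply the details the paper omits.
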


This elegant solution reveals how the kernel adapts to both the mode number and the geometry of the sector domain. The modified Bessel function $I_1$ captures the parabolic nature of the diffusion process, while the term $\left(\frac{\rho}{r}\right)^{\frac{n\pi}{\theta_2-\theta_1}}$ accounts for the angular mode's behavior in the radial direction.

Once we have derived the kernel functions for each mode, we can reconstruct the physical space control law:

\begin{align}
U(t,\theta) &= -\sum_{n=1}^N \int_0^R k_n(R,\rho)u_n(t,\rho)\rho d\rho
\sin\left(\frac{n\pi(\theta - \theta_1)}{\theta_2-\theta_1}\right) \\
&= -\int_0^R \int_{\theta_1}^{\theta_2} K(R,\rho,\theta,\eta)u(t,\rho,\eta)\rho d\eta d\rho
\end{align}
where the complete kernel $K$ is:
\begin{align}
K(r,\rho,\theta,\eta) &= 2\sum_{n=1}^N \rho \Bigg(
\frac{\lambda+c}{\epsilon}
\left(\frac{\rho}{r}\right)^{\frac{n\pi}{\theta_2-\theta_1}}
\frac{I_1\left[\sqrt{\frac{\lambda+c}{\epsilon}(r^2-\rho^2)}\right]}{\sqrt{\frac{\lambda+c}{\epsilon}(r^2-\rho^2)}}  \sin\left(\frac{n\pi(\eta - \theta_1)}{\theta_2-\theta_1}\right)
\sin\left(\frac{n\pi(\theta - \theta_1)}{\theta_2-\theta_1}\right)\Bigg)
\end{align}

In practice, we truncate the infinite series to a finite number of modes $N$, which leads to the following stability result:

\begin{theorem}[Exponential Stability of the Closed-Loop System]
Consider the reaction-diffusion system on a sector domain with the feedback control law given by:
\begin{equation}
U(t,\theta) = -\int_0^R \int_{\theta_1}^{\theta_2} K(R,\rho,\theta,\eta)u(t,\rho,\eta)\rho d\eta d\rho
\end{equation}

For the closed-loop system, if the number of controlled modes $N$ satisfies:
\begin{equation}
N > \sqrt{\frac{c+\lambda}{\epsilon}}\frac{(\theta_2-\theta_1)R}{\pi}
\end{equation}
then the zero equilibrium is exponentially stable with decay rate $c$, i.e.:
\begin{equation}
\|u(t,\cdot,\cdot)\|_{L^2(\Omega)} \leq Me^{-ct}\|u(0,\cdot,\cdot)\|_{L^2(\Omega)}
\end{equation}
where $\Omega$ denotes the sector domain and $M > 0$ is a constant.
\end{theorem}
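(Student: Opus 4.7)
The plan is to follow the same split strategy used in Theorem~\ref{th-stab}: decompose $u$ in the angular sine basis $\Phi_n$, handle the first $N$ modes via the backstepping transformation of the previous subsection, estimate the tail ($n>N$) directly from natural angular damping, and stitch the two together with Parseval. The key adaptation compared with the strip case is the radial weight: since the self-adjoint form of the Laplacian in polar coordinates is $\tfrac{1}{r}(r\partial_r)_r$, I would work throughout with the weighted Lyapunov functions
\begin{equation}
V_n(t) = \tfrac{1}{2}\int_0^R r\,|u_n(t,r)|^2\, dr,
\end{equation}
and analogously for the target state $w_n$, so that integration by parts in $r$ absorbs the Jacobian without leftover terms.

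For the controlled modes ($n\le N$), the Volterra transformation $w_n = u_n - \int_0^r k_n(r,\rho)\,u_n(t,\rho)\,d\rho$ built from the explicit kernel derived above yields the target system $w_{n,t} = \epsilon\bigl(\tfrac{1}{r}(rw_{n,r})_r - \alpha_n^2 w_n/r^2\bigr) - c\,w_n$ with $w_n(t,R)=0$. Differentiating the weighted Lyapunov functional for $w_n$ and integrating by parts gives
\begin{equation}
\dot V_n^w = -\epsilon\!\int_0^R r\,w_{n,r}^2\, dr \;-\; \epsilon\,\alpha_n^2\!\int_0^R \frac{w_n^2}{r}\, dr \;-\; 2c\,V_n^w \;\le\; -2c\,V_n^w.
\end{equation}
Invertibility of the transformation follows because on the integration domain $\{0\le\rho\le r\le R\}$ the factor $(\rho/r)^{\alpha_n}$ is bounded by $1$, so the classical successive-approximations argument produces an $L^\infty$-bounded inverse kernel $l_n$. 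Combining this with the target decay yields $\|u_n(t,\cdot)\|_{L^2_r[0,R]} \le M_n e^{-ct}\|u_n(0,\cdot)\|_{L^2_r[0,R]}$ for finitely many constants $M_1,\dots,M_N$.

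For the uncontrolled modes ($n>N$), truncation forces $U_n\equiv 0$, hence $u_n(t,R)=0$, and a direct Lyapunov computation yields
\begin{equation}
\dot V_n = -\epsilon\!\int_0^R r\,u_{n,r}^2\, dr - \epsilon\,\alpha_n^2\!\int_0^R \frac{u_n^2}{r}\, dr + 2\lambda\,V_n.
\end{equation}
The essential step is the elementary pointwise bound $1/r\ge r/R^2$ on $(0,R]$, which gives the Poincaré-type inequality $\int_0^R u_n^2/r\, dr \ge 2V_n/R^2$. Thus $\dot V_n \le 2\bigl(\lambda - \epsilon\,\alpha_n^2/R^2\bigr)V_n$, and the hypothesis $n>N>\sqrt{(c+\lambda)/\epsilon}\,(\theta_2-\theta_1)R/\pi$ is exactly what guarantees $\alpha_n^2 > R^2(c+\lambda)/\epsilon$, securing $\dot V_n \le -2c\,V_n$.

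Finally, by Parseval's identity for the sine basis on $[\theta_1,\theta_2]$,
\begin{equation}
\|u(t,\cdot,\cdot)\|_{L^2(\Omega)}^2 \;=\; \int_{\theta_1}^{\theta_2}\!\!\int_0^R r\,u^2(t,r,\theta)\,dr\,d\theta \;=\; \frac{\theta_2-\theta_1}{2}\sum_{n=1}^{\infty} \int_0^R r\,u_n^2\,dr,
\end{equation}
so summing the two families of modal estimates (with $M=\max\{1,\max_{n\le N} M_n\}$) delivers the global $L^2(\Omega)$ decay at rate $c$. The main technical obstacle I anticipate is the singular coefficient $1/r^2$ at the pole: one must justify the vanishing of boundary terms at $r=0$ in every integration by parts, which in turn requires that each $u_n$ belong to the regular indicial class $u_n = O(r^{\alpha_n})$ as $r\to 0^+$; I would verify that this regularity is both compatible with the admissible data and preserved by the kernel $k_n$, whose $(\rho/r)^{\alpha_n}$ prefactor is precisely tailored to this behaviour.
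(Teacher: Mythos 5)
Your proposal is correct and follows exactly the strategy the paper uses for Theorem~\ref{th-stab} (split into backstepping-controlled modes and a naturally damped tail, then stitch with Parseval), adapted with the radial weight $r\,dr$ and the elementary bound $1/r\ge r/R^2$ on the centrifugal term, which reproduces precisely the threshold $N>\sqrt{(c+\lambda)/\epsilon}\,(\theta_2-\theta_1)R/\pi$ appearing in the statement. The paper in fact states this theorem without a proof, so your argument --- including the correctly flagged need to justify the vanishing of boundary terms at $r=0$ via the indicial behaviour $u_n=O(r^{\alpha_n})$ --- supplies the missing details rather than diverging from the intended route.
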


This theorem establishes that by controlling a sufficient number of modes, the closed-loop system achieves exponential convergence to the zero equilibrium with a prescribed decay rate. The required number of modes depends on the domain parameters (angular span and radius), the reaction coefficient, the diffusion coefficient, and the desired decay rate. 

As in the rectangular case, we can consider finite-dimensional actuation in practical implementations, but the detailed analysis is omitted for brevity. The key takeaway is that the control problem on a sector domain can be effectively addressed through modal decomposition and backstepping techniques, resulting in an explicit control law with provable stability properties.

\section{Domain Extension Method for Irregular Domains}\label{sec:domain}

We now address the control of reaction-diffusion equations on domains with irregular geometry. 

Consider as an example the system:

\begin{align}
u_t &= \epsilon(u_{xx} + u_{yy}) + \lambda u, \quad (x,y) \in \Omega \\
u &= 0 \quad \text{on all edges except control boundary} \\
u &= U(t,x) \quad \text{on control boundary}
\end{align}

where $\Omega$ is the piano-shaped domain depicted in Figure \ref{fig:piano}.

\begin{figure}[!t]
\centering
\begin{tikzpicture}[scale=2]
    % Define points for the piano shape
    \coordinate (A) at (0,0);
    \coordinate (B) at (2,0);
    \coordinate (C) at (2,2);
    \coordinate (D) at (0,2);
    \coordinate (E) at (1,2); % start of diagonal
    \coordinate (F) at (0,1); % end of diagonal
    
    % Fill piano shape with light color
    \fill[black!10] (A) -- (B) -- (C) -- (E) -- (F) -- cycle;
    
    % Draw the edges
    \draw[thick] (A) -- (B) -- (C) -- (E) -- (F) -- cycle;
    
    % Draw control boundaries
    \draw[red, very thick] (E) -- (F) node[midway, sloped, above] {$U_1(t,s)$};
    \draw[blue, very thick] (C) -- (E) node[midway, above] {$U_2(t,x)$};
    
    % Label dimensions
    \draw[<->] (0,-0.2) -- (2,-0.2) node[midway, below] {$L$};
    \draw[<->] (2.2,0) -- (2.2,2) node[midway, right] {$L$};
\end{tikzpicture}
\caption{Piano-shaped domain with control at the "back" boundaries}
\label{fig:piano}
\end{figure}
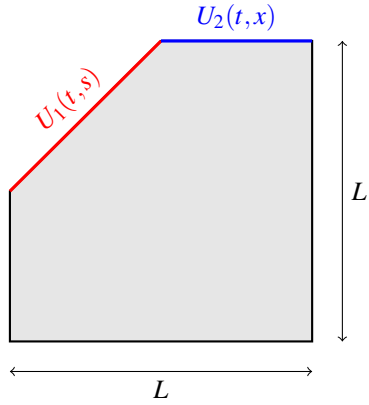

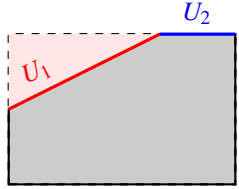
\begin{figure}[t!]
\centering
\begin{tikzpicture}[scale=1]
    % Original irregular domain - made simpler
    \fill[black!20] (0,0) -- (3,0) -- (3,2) -- (2,2) -- (1,1.5) -- (0,1) -- cycle;
    \draw[thick] (0,0) -- (3,0) -- (3,2) -- (2,2) -- (1,1.5) -- (0,1) -- cycle;
    
    % Extended regular domain (dashed)
    \draw[thick, dashed] (0,0) -- (3,0) -- (3,2) -- (0,2) -- cycle;
    
    % Extension region with pattern
    \fill[red!10] (2,2) -- (1,1.5) -- (0,1) -- (0,2) -- cycle;
    
    % Controls
    \draw[blue, very thick] (3,2) -- (2,2) node[midway, above] {$U_2$};
    \draw[red, very thick] (2,2) -- (1,1.5) -- (0,1) node[pos=0.5, sloped, above] {$U_1$};
\end{tikzpicture}
\caption{General domain extension concept: irregular domain (shaded) extended to regular domain (square) with extension region (colored)}
\label{fig:extension_general}
\end{figure}

The key insight for controlling such a domain is to extend it to a simpler domain where known control techniques can be applied. This leads to our main methodological contribution, the \emph{Domain Extension} technique for solving boundary control problems. Thus, the original control problem on $\Omega$ can be transformed into an equivalent problem on the extended square domain $[0,L]\times[0,L]$ through appropriate choice of controls $U_1$ and $U_2$ on the irregular boundary.

The domain extension approach proceeds in several steps. Consider the extension of $\Omega$ to the square $[0,L]\times[0,L]$. Let $\Omega_e$ denote the extension region (the "cut-off" triangle, see Fig.~\ref{fig:extension_general}). In addition:
\begin{enumerate}
\item The dynamics in $\Omega_e$ are simulated using the same PDE.
\item The boundary conditions for $\Omega_e$ are: the value of the normal derivative of $u$ (flux) at the interface, which can be obtained from the original equation, zero in the side and the control law computed from~(\ref{control-law-square}) where the value of the state comes from the extended domain.
\item Then solution in $\Omega_e$ is used to compute the value for $U_1$.
\end{enumerate}

Since the solutions match at the interface, then by uniqueness, the solution in $\Omega$ solves the original problem and the following theorem can be stated.
\begin{theorem}
If the extended system is exponentially stable with decay rate $c$, then the solution in the original piano domain satisfies
\begin{equation}
\|u(t,\cdot,\cdot)\|_{L^2(\Omega)} \leq Me^{-ct}\|u(0,\cdot,\cdot)\|_{L^2(\Omega)}
\end{equation}
where $M$ depends only on the geometry of $\Omega$.
\end{theorem}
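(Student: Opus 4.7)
The plan is to reduce the analysis on the piano-shaped domain $\Omega$ to the already-known stability result for the full square by treating the extended square state as the primary object and the original state as its restriction. The first step is to fix any bounded linear extension operator $E \colon L^2(\Omega) \to L^2([0,L]^2)$---for example a Stein-type Sobolev extension that preserves values on $\Omega$---and use it to lift the initial condition $u(0,\cdot,\cdot)$ to an initial datum $\tilde u(0,\cdot,\cdot) := E\,u(0,\cdot,\cdot)$ on the whole square, with $\|\tilde u(0,\cdot,\cdot)\|_{L^2([0,L]^2)} \le C_\Omega \,\|u(0,\cdot,\cdot)\|_{L^2(\Omega)}$ for some constant $C_\Omega$ that depends only on the geometry of $\Omega$.

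Second, I would evolve the reaction-diffusion equation on the entire square from this initial datum under the distributed square-domain feedback \eqref{control-law-square} applied to $\tilde u$. Since this feedback was shown in Section~\ref{sec:square} to drive the target system into an equally-damped, trivially stable closed loop, and since the backstepping transformation is a bounded invertible Volterra map on $L^2([0,L]^2)$, the resulting extended state satisfies
\begin{equation*}
\|\tilde u(t,\cdot,\cdot)\|_{L^2([0,L]^2)} \le \tilde M\, e^{-ct}\, \|\tilde u(0,\cdot,\cdot)\|_{L^2([0,L]^2)}.
\end{equation*}

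Third, I would verify that $\tilde u|_\Omega = u$. By the recipe preceding the theorem, the physical boundary controls $U_1$ and $U_2$ are the traces of $\tilde u$ on the slanted interface and on the upper horizontal edge of $\Omega$, and the flux-matching interface condition guarantees that $\tilde u$ is not just a pair of local solutions but a genuine weak solution of the same PDE on the entire square. Hence $\tilde u|_\Omega$ and $u$ satisfy the same linear parabolic initial-boundary-value problem on $\Omega$ with identical initial data and identical boundary data; uniqueness then forces $\tilde u|_\Omega = u$. Chaining the bounds,
\begin{equation*}
\|u(t,\cdot,\cdot)\|_{L^2(\Omega)} \le \|\tilde u(t,\cdot,\cdot)\|_{L^2([0,L]^2)} \le \tilde M\, C_\Omega\, e^{-ct}\, \|u(0,\cdot,\cdot)\|_{L^2(\Omega)},
\end{equation*}
so the claim holds with $M := \tilde M\, C_\Omega$, which depends only on the geometry of $\Omega$ through $C_\Omega$.

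The main obstacle I anticipate is the apparent circularity in the definition of the extension: $U_1$ is extracted from an $\Omega_e$ sub-problem whose interface data come from $u$ on $\Omega$, while the square-domain control on the outer boundary depends on $\tilde u$. The cleanest way to cut this loop is to take the square system as the primary well-posed object, derive both $\tilde u|_{\Omega_e}$ and the trace controls from it, and only then connect back to the original problem by uniqueness. The delicate technical point inside this plan is ensuring that $\tilde u$ is a true weak solution across the $\Omega/\Omega_e$ interface, which amounts to the $C^1$ flux-matching already built into the construction but must be checked with enough regularity of $E$ and the usual parabolic smoothing; verifying this carefully, together with the fact that the Sobolev-extension constant $C_\Omega$ is genuinely a geometric quantity, is where I would expect to spend the most effort.
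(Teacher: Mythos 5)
Your proposal is correct and follows essentially the same route as the paper: take the extended square system as the primary well-posed object, invoke its exponential stability, identify $u$ with the restriction of the extended solution to $\Omega$ by uniqueness across the interface, and conclude by monotonicity of the $L^2$ norm under restriction. You are in fact more careful than the paper's own one-line proof, which stops at $\|u(t,\cdot,\cdot)\|_{L^2(\Omega)} \le M e^{-ct}\|v(0,\cdot,\cdot)\|_{L^2([0,L]^2)}$; your explicit extension-operator bound $\|\tilde u(0,\cdot,\cdot)\|_{L^2([0,L]^2)} \le C_\Omega \|u(0,\cdot,\cdot)\|_{L^2(\Omega)}$ is precisely the missing step needed to reach the stated right-hand side in terms of $\|u(0,\cdot,\cdot)\|_{L^2(\Omega)}$ with a constant depending only on the geometry.
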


\begin{proof}
Let $v(t,x,y)$ be the solution in the extended square domain. By construction, $u(t,x,y) = v(t,x,y)$ for $(x,y) \in \Omega$. Since $v$ satisfies
\begin{equation}
\|v(t,\cdot,\cdot)\|_{L^2([0,L]^2)} \leq Me^{-ct}\|v(0,\cdot,\cdot)\|_{L^2([0,L]^2)}
\end{equation}
and $\Omega$ is a subset of $[0,L]^2$, the result follows immediately.
\end{proof}

The practical implementation requires solving two coupled problems, namely, the simulation of the extension domain dynamics (with complementary boundary conditions at the interface), and the computation of the backstepping control law for the full extended domain based on real and virtual values.

The domain extension method has several important properties:
\begin{enumerate}
\item Preserves the parabolic smoothing properties of the original PDE
\item Requires no explicit boundary conditions at the interface
\item Maintains stability margins of the original backstepping design
\end{enumerate}

In general, if we consider the case where the entire boundary of the original domain is controlled, the mechanism of control works as follows for any possible domain:
\begin{enumerate}
\item We choose an extended domain where we know how to solve the control problem (this could be referred to as a ``target domain'' as an analogy to the backstepping target system).
\item We design a control law for the extended regular domain $\Omega_R$
\item We implement this control on the outer boundary of $\Omega_R$
\item We establish a ``transfer'' mechanism at the interface $\Gamma_i$ between the real and virtual domains
\end{enumerate}

It must be mentioned that calling the extension domain the ``target domain'' is not merely an allusion to the target equation concept. Indeed the behaviour of the system (the solution) would be ``as if'' the domain was indeed the chosen one for the extension.

This transfer mechanism depends on the type of boundary conditions: For Dirichlet boundary conditions: The state values $u(t,x)$ at the interface $\Gamma_i$ are used as inputs to the virtual domain and the derivatives (normal flow) $\frac{\partial u}{\partial n}(t,x)$ at the interface, computed from the Neumann map in the virtual domain, determine the control law on $\Gamma_c$. For Neumann boundary conditions:  The derivatives $\frac{\partial u}{\partial n}(t,x)$ at the interface are used as inputs to the virtual domain and the state values $u(t,x)$ at the interface, computed from the Dirichlet map in the virtual domain, determine the control law on $\Gamma_c$

Consider, for instance, an irregular domain $\Omega$ that we wish to extend to a disk:
\begin{equation}
\Omega = \{(r,\theta): 0 \leq r \leq R(\theta), \theta \in [0,2\pi]\}
\end{equation}
where $R(\theta)$ is a piecewise smooth function defining the irregular boundary.

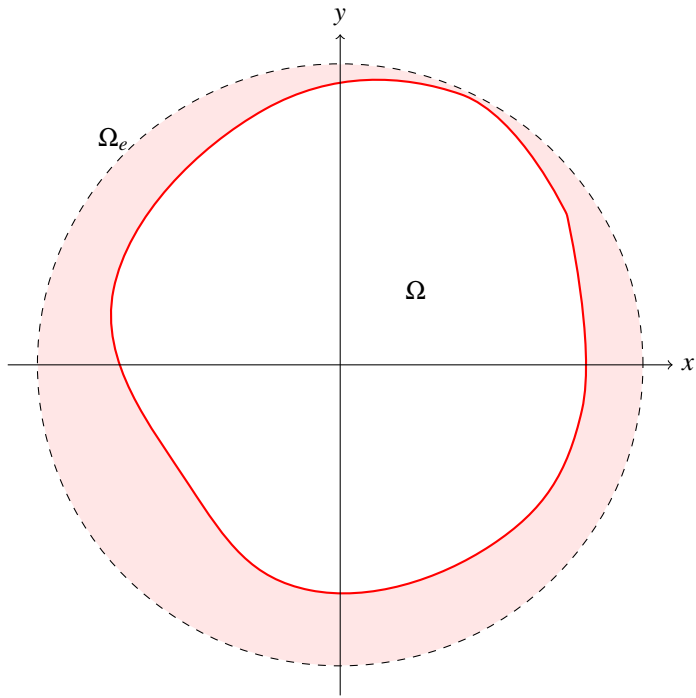
\begin{figure}[!t]
\centering
\begin{tikzpicture}[scale=2]
    % Draw the regular disk boundary (dashed)
    \draw[thick,dashed] (0,0) circle (2);
    
    % Draw an irregular internal domain (solid, filled light)
    \draw[thick] plot[smooth, tension=0.7] 
        coordinates {(1.5,1) (0.8,1.8) (-0.5,1.7) (-1.5,0.5) 
        (-1,-0.8) (-0.2,-1.5) (1,-1.2) (1.6,-0.3) (1.5,1)};
    \fill[black!10] plot[smooth, tension=0.7] 
        coordinates {(1.5,1) (0.8,1.8) (-0.5,1.7) (-1.5,0.5) 
        (-1,-0.8) (-0.2,-1.5) (1,-1.2) (1.6,-0.3) (1.5,1)};
    
    % Fill extension region with pattern
    \fill[red!10] (0,0) circle (2);
    \fill[white] plot[smooth, tension=0.7] 
        coordinates {(1.5,1) (0.8,1.8) (-0.5,1.7) (-1.5,0.5) 
        (-1,-0.8) (-0.2,-1.5) (1,-1.2) (1.6,-0.3) (1.5,1)};
    
    % Add control boundary marking
    \draw[red,thick] plot[smooth, tension=0.7] 
        coordinates {(1.5,1) (0.8,1.8) (-0.5,1.7) (-1.5,0.5) 
        (-1,-0.8) (-0.2,-1.5) (1,-1.2) (1.6,-0.3) (1.5,1)};
    
    % Add axes
    \draw[->] (-2.2,0) -- (2.2,0) node[right] {$x$};
    \draw[->] (0,-2.2) -- (0,2.2) node[above] {$y$};
    
    % Add labels
    \node at (0.5,0.5) {$\Omega$};
    \node at (-1.5,1.5) {$\Omega_e$};
\end{tikzpicture}
\caption{Extension of an irregular domain to the target domain of a disk. Original domain $\Omega$ (shaded), extension region $\Omega_e$ (patterned), and control boundary (red).}
\label{fig:disk_extension}
\end{figure}

In this case, the target or extension domain $\Omega_R$ is the disk of radius $R_{\max} = \max_{\theta} R(\theta)$ as shown in Figure~\ref{fig:disk_extension}. The control problem becomes particularly elegant as we can exploit the radial symmetry of the extended domain and apply e.g. the designs of~\cite{nball} while accommodating the irregular geometry of the original domain.

This approach can be generalized to higher dimensions. For instance, in $\mathbb{R}^3$, we can extend a truncated ball to a complete ball and utilize spherical harmonics for the solution representation. The beauty of this approach lies in its versatility: virtually any domain with a Lipschitz boundary can be extended to a regular domain where our control techniques apply. A complete mathematical treatment, including explicit construction of the extension operators and analysis of the interface conditions, will be presented in future work.

\section{Conclusions}\label{sec:concl}
This paper has presented a systematic framework for controlling reaction-diffusion PDEs in higher dimensions through the exploitation of domain symmetries and geometric properties. Three fundamental approaches have been developed:

The first approach, based on Fourier analysis in rectangular domains, provides explicit conditions for stabilization with finite-dimensional actuation. The relationship between number of actuators and achievable performance was precisely characterized.

The second approach addresses sector domains through angular eigenfunction expansions, yielding explicit kernel solutions in terms of modified Bessel functions. The geometric properties of the sector lead to enhanced stability properties for higher modes.

Finally, the domain extension methodology provides an idea for irregular domains, transforming the original problem into one (a target domain) where known control techniques can be applied. This approach maintains the stability properties of the original design while accommodating complex geometries.

These results establish a comprehensive framework for PDE control in higher dimensions, providing both theoretical understanding and practical implementation guidelines.

Recent work has also explored the use of machine learning techniques to approximate backstepping kernels \cite{krstic2023neural}, opening new avenues for practical implementation.

%\section*{Acknowledgments}
%The authors thank the anonymous reviewers for their valuable suggestions. This work is supported by grant PID2023-147623OB-I00 funded by MICIU/AEI/10.13039/501100011033 and by ``ERDF A way of making Europe.'' 

\section*{Funding}
R. Vazquez acknowledges support of grant PID2023-147623OB-I00 funded by MICIU/AEI/10.13039/ 501100011033 and by ``ERDF A way of making Europe.''

\end{document}